\definecolor{citecol}{HTML}{802D1B}
\definecolor{tableofcontent}{HTML}{1F4A83}
\definecolor{urlcol}{HTML}{2470D8}
\newtheorem{theorem}{Theorem}[section]
\newtheorem{lemma}[theorem]{Lemma}
\theoremstyle{definition}
\newtheorem{proposition}[theorem]{Proposition}
\newtheorem{corollary}[theorem]{Corollary}
\theoremstyle{remark}
\newtheorem{remark}{Remark}
\numberwithin{equation}{section}
\newcommand{\R}{\mathbb{R}}
\newcommand{\Sph}{\mathbb{S}^2}
\newcommand{\calR}{\mathcal{R}}
\DeclareMathOperator{\tr}{tr}
\def\dd{{\mathrm{d}}}
\newcommand{\inner}[1]{{\left\langle #1 \right\rangle}}
\newcommand{\vKmaxabs}{v_{{\rm max}}}
\newcommand{\vK}{v}
\newcommand{\avJ}{{a}^v_J} %new
\newcommand{\vKmin}{v^{{\rm min}}}
\newcommand{\vKmax}{v^{{\rm max}}}
\newcommand{\amin}{\widehat{\ba}}
\newcommand{\Qmin}{\widehat{Q}}
\newcommand{\bzero}{\boldsymbol{0}}
\newcommand{\ba}{{\boldsymbol{a}}}
\newcommand{\bb}{{\boldsymbol{b}}}
\newcommand{\bc}{{\boldsymbol{c}}}
\newcommand{\br}{{\boldsymbol{r}}}
\newcommand{\bq}{{\boldsymbol{q}}}
\newcommand{\bu}{{\boldsymbol{u}}}
\newcommand{\by}{{\boldsymbol{y}}}
\newcommand{\beps}{{\boldsymbol{\varepsilon}}}
\newcommand{\balpha}{{\boldsymbol{\alpha}}}
\newcommand{\aCov}{\Omega} % Covariance of SH coeffs \ba
\newcommand{\eCov}{\Upsilon} % Covariance of noise \beps (was \Sigma)
\title[Removing the mask]{Removing the mask -- reconstructing a scalar field on the sphere from a masked field}
\author[Hamann]{Jan Hamann}
\address{School of Physics, UNSW Sydney, Australia.}
\email{jan.hamann@unsw.edu.au}
\author[Le Gia, Sloan, Womersley]
{Quoc T. Le Gia \and Ian H. Sloan \and Robert S. Womersley}
\address{School of Mathematics and Statistics, UNSW Sydney, Australia.}
\email{qlegia@unsw.edu.au, i.sloan@unsw.edu.au, r.womersley@unsw.edu.au}
\date{5 Sep 2024}
\begin{document}

\maketitle

% REQUIRED
\begin{abstract}
The paper analyses a spectral approach to reconstructing %the image of 
a scalar field on the sphere, given only information about a masked version of
the field together with precise information about the (smooth) mask. The theory is developed for a general mask, and later specialised to the case of an axially symmetric mask. Numerical experiments are given for the case of an axial mask motivated by the cosmic microwave background, assuming that the underlying field is a realisation of a Gaussian random field with an artificial angular power spectrum of moderate degree ($\ell \le 100$).  The recovery is highly satisfactory in the absence of noise and even in the presence 
of moderate noise. 
\end{abstract}

\section{Introduction}
In this paper, we study the reconstruction of a scalar field (for example temperature or pressure) on
the unit sphere, given (possibly noisy) data on  a masked version of the
field, together with precise knowledge of the mask.  The underlying motivation is the cosmic microwave background (CMB) for which
the temperature observations, so important for the modern understanding of
the early universe, are obscured over substantial portions of the sky by
our own Milky Way, creating the need for masking some portions before attempting reconstruction.

The paper aims at a proof-of-concept for a new spectral approach to such problems.  While the theory is general, the numerical experiments are restricted to the case of an axially symmetric mask, and limit the field's angular power spectrum to polynomial degree $\ell \le  100$,
corresponding to an angular resolution of approximately $2^{\degree}$.
Within these limitations, the recovery is shown to be highly satisfactory in the no-noise case, and also in the case of moderate noise.

There is a rich literature on the inpainting problem for the particular case of the CMB~\cite{Abrial:2008mz,Starck:2012gd}, with techniques based on harmonic methods~\cite{Bielewicz:2004en,Inoue:2008qf}, iterative methods~\cite{Nishizawa:2013uwa,Gruetjen:2015sta}, constrained Gaussian realisations~\cite{Kim:2012iq,Bucher:2011nf},
group sparse optimisation methods~\cite{LiChen2023} or
neural networks~\cite{Yi_2020,Puglisi:2020deh,Sadr:2020rje,Montefalcone:2020fkb,Wang:2022ybb}.
%, too vast to cover here.  

Closest to the present approach is the work of Alonso et al.~\cite{Alonso:2018jzx}, which however differs in aim (which was to recover the angular power spectrum from a knowledge of the ``pseudo $C_\ell$'', through pooling together Fourier coefficients of different degrees).

The problem is formulated in the next section, by reducing the problem to that of solving a large ill-posed linear system. 
Section~\ref{sec:PropE} establishes properties of the matrix in that linear system. 
Section~\ref{sec:Solving} outlines our stochastic approach to the solution of the linear system. Because the full linear system is currently beyond our resources, in Section~\ref{sec:axsym} we obtain a large reduction in difficulty by specialising to the case of an axially symmetric mask. The final section is devoted to numerical experiments in both the no-noise and noisy cases.

\section{The problem setting}\label{sec:Setting}

Taking $\br$ to be any point in the unit sphere $\mathbb{S}^2$ in $\R^3$
(i.e. $\br$ is a unit vector in $\R^3$),
it can be represented in spherical coordinates as
\[
\br = (\sin\theta \cos\phi, 
\sin\theta \sin \phi, \cos\theta),
\quad \theta \in [0,\pi],\; \phi \in [0,2\pi).
\]

The underlying real scalar field $a(\br)$ is
assumed to be partially obscured  by a known mask $v = v(\br)$ to give a masked field $a^v = a^v(\br) := a(\br) v(\br)$. 

It is well-known \cite{Marinucci_Peccati2011} that the space of square-integrable functions $L_2(\mathbb{S}^2)$ admits 
an orthonormal basis formed by spherical harmonics
$$\{Y_{\ell,m}: \ell=0,\ldots; m=-\ell,\ldots,\ell\},$$ where the $Y_{\ell,m}$ is given explicitly by
\begin{equation}
\begin{aligned}
Y_{\ell,m}(\theta,\phi) &= 
\sqrt{ \frac{2\ell+1}{4\pi}
\frac{(\ell-m)!}{(\ell+m)!}} P^{m}_{\ell} (\cos\theta) e^{im \phi}, \quad m \ge 0,\\
Y_{\ell,m}(\theta,\phi) &= (-1)^m \overline{Y}_{\ell,-m}
(\theta,\phi), \quad m < 0,
\end{aligned}
\end{equation}
where $\{P^m_{\ell}\}$ denote the associated 
Legendre function, which is defined in terms of Legendre polynomials $\{P_{\ell}:\ell-0,1,\ldots\}$ by
\[
P^m_{\ell}(t) = 
(-1)^m (1-t^2)^{m/2} \frac{\dd^m}{\dd t^m} P_{\ell}(t),
\quad m=0,1,\ldots,\ell; \;\ell=0,1,2,\ldots
\]

We assume that the incompletely known field $a$ is a spherical polynomial of degree at most $L$, and hence expressible as an 
expansion in terms of orthonormal (complex) spherical harmonics 
$Y_{\ell,m}(\br)$ of degree $\ell \le L$,  
\begin{equation}\label{eq:a_and_v}
a(\br) = \sum_{\ell=0}^L \sum_{m = -\ell}^\ell a_{\ell,m} Y_{\ell,m}(\br),
\qquad \br\in\Sph,
\end{equation}
where 
\[
a_{\ell,m} := \int_{\Sph} a(\br)\overline{Y_{\ell,m}(\br)} 
\dd S(\br) ,
\]
where $\dd S(\br) = \sin\theta \dd\theta \dd\phi$, following from the orthonormality of the spherical harmonics,
\[
\int_ {\Sph}Y_{\ell,m}(\br) \overline{Y_{\ell',m'}(\br)} \dd S(\br) =
\delta_{\ell, \ell'} \delta_{m,m'},
\quad \ell, \ell' \ge 0,\; m = -\ell,\ldots,\ell,\;
 m' =  -\ell',\ldots,\ell'.
\]

We assume that the mask %is expressible 
can be adequately approximated by a partial sum of spherical harmonics series
as\footnote{The mask $v$ is
typically identically $1$ or $0$ over some parts of the sphere,
and hence not polynomial, but expressible as a uniformly convergent infinite sum of the same form as in \eqref{eq:v}.} 
\begin{equation}\label{eq:v}
\vK(\br) = \sum_{k=0}^K \sum_{\nu = -k}^k v_{k,\nu} Y_{k,\nu}(\br),
\end{equation}
where 
\[
 v_{k,\nu} := \int_{\Sph} v(\br)\overline{Y_{k,\nu}(\br)} \dd S(\br) .
\]
Thus the masked signal $a^v(\br) = a(\br) v(\br)$
is expressible as a spherical harmonic expansion of degree $L + K$,
\begin{equation}\label{eq:avfun}
    a^v(\br) = a(\br) v(\br) = \sum_{j=0}^{L+K}\sum_{\mu=-j}^{j}a_{j,\mu}^{v} Y_{j,\mu}(\br), \qquad \br\in\Sph,
\end{equation}
where
\begin{align}\label{eq:av}
    a_{j,\mu}^v
    &:= \int_{\Sph} \avJ(\br)\overline{Y_{j,\mu}(\br)} \dd S(\br)
    = \int_{\Sph} a(\br)v(\br)\overline{Y_{j,\mu}(\br)} \dd S(\br)\\
    & = \int_{\Sph}
    \left(\sum_{\ell=0}^{L}
    \sum_{m=-\ell}^{\ell}
    a_{\ell,m}Y_{\ell,m}(\br)\right)
    \left(\sum_{k=0}^{K}\sum_{\nu=-k}^{k}
    v_{k,\nu}Y_{k,\nu}(\br)\right)
    \overline{Y_{j,\mu}(\br)}\dd S(\br)\nonumber\\
    &= \sum_{\ell=0}^{L}
    \sum_{m=-\ell}^{\ell}
    \sum_{k=0}^{K}
    \sum_{\nu=-k}^{k}a_{\ell,m}v_{k,\nu}
    \int_{\Sph} Y_{\ell,m}(\br) Y_{k,\nu}(\br) \overline{Y_{j,\mu}(\br)}\dd S(\br)\nonumber\\
    &= \sum_{\ell=0}^{L}
    \sum_{m=-\ell}^{\ell} E_{j,\mu; \ell,m} a_{\ell,m},
    \nonumber
\end{align}
where, for $j = 0, \ldots, L+K$, $\mu = -j, \ldots, j$
and $\ell = 0,\ldots,L$, $m = -\ell,\ldots,\ell$,
\begin{equation}\label{eq:Edefined}
E_{j,\mu; \ell,m} = \sum_{k=0}^{K}
    \sum_{\nu=-k}^{k}
    \int_{\Sph} Y_{\ell,m}(\br) Y_{k,\nu}(\br) \overline{Y_{j,\mu}(\br)}\dd S(\br)  \; v_{k,\nu}. 
\end{equation}

Thus the essential task in the reconstruction is to solve as accurately as possible the large linear system 
\begin{equation}\label{eq:aequalsEa}
\sum_{\ell=0}^{L}
    \sum_{m=-\ell}^{\ell}
    E_{j,\mu;\ell,m}a_{\ell,m} = a_{j,\mu}^v,
\end{equation}
where $E$ is defined by \eqref{eq:Edefined}.
 
Equation \eqref{eq:aequalsEa} can be
considered as an overdetermined (but possibly not full-rank) set of linear
equations for the $a_{\ell, m}$. We will find it convenient to replace the
upper limit $L+K$ in
\eqref{eq:aequalsEa} %this equation
by a more flexible upper limit
$J$ with $L \leq J \leq L+K$.
Then the equation can be written as
\begin{equation}\label{eq:aeqav_math}
E \ba = \ba^v,
\end{equation}
where $E$ is a $(J+1)^2 \times (L+1)^2$ matrix, 
\begin{equation}\label{eqn-ba}
\ba = (a_{\ell,m}, \ell = 0,\ldots,L,
  m = -\ell,\ldots,\ell) \in \mathbb{C}^{(L+1)^2}.
\end{equation}
and
\begin{equation}\label{eqn-bav}
\ba^v = (a^v_{j,\mu}, \ell = 0,\ldots,J,
  m = -\ell,\ldots,\ell) \in \mathbb{C}^{(J+1)^2}.
\end{equation}

In Section~\ref{sec:Solving} we come to the most challenging part of the paper, which is the approximate solution of the ill-posed linear system, and before it the computation of the matrix $E$.  Before then, however, it is useful to establish properties of the matrix $E$.

\section{Properties of $E$} \label{sec:PropE}
This section summarizes useful properties of the matrix $E$ 
defined in \eqref{eq:Edefined}.
We first note that the product of three spherical harmonics  (known as a Gaunt coefficient) can be evaluated in terms of Wigner 3j symbols, see
eg. ~\cite[Eq.~34.4.22]{NIST:DLMF} or \cite{Alonso:2018jzx}.  Explicitly,
\begin{align}\label{eq:D}
D_{ \ell,m; k, \nu; j,\mu}
&:= \int_{\Sph} Y_{\ell,m}(\br) Y_{k,\nu}(\br) \overline{Y_{j,\mu}(\br)}\dd S(\br)\\
&= (-1)^\mu \sqrt{\frac{(2\ell+1)(2k+1)(2j+1)}{4\pi}}\nonumber\\
    & \hspace{.3cm}\times \begin{pmatrix}
    \ell & k & j\\
    0 & 0 & 0
    \end{pmatrix}
    \begin{pmatrix}
    \ell & k & j\\
    m & \nu & -\mu
    \end{pmatrix} .\nonumber
\end{align}
As important special cases, 
\begin{equation}\label{eq:Dprops}
D_{ \ell,m; k, \nu; j,\mu} = 0 \, \mbox{ if}\,
\begin{cases}
j+\ell+k \quad \mbox{is odd, or}\\
k < |j - \ell|,\mbox{ or}\\
k > j + \ell,\mbox{ or}\\
m  + \nu \ne  \mu.
\end{cases}
\end{equation}

 The following lemma gives  several elementary properties of the matrix $E$, beginning with an explicit integral  expression in terms of the mask function $v$.
\begin{lemma}\label{lem:Eprop}
The elements of the matrix $E$ satisfy
\begin{equation}\label{eq:Eint}
E_{j,\mu;\ell,m} =
  \int_{\Sph} \overline{Y_{j, \mu}(\br)}
  Y_{\ell,m }(\br) v(\br)\, \dd S(\br),
\end{equation}
and, for a real mask $v$,
\begin{eqnarray}
E_{\ell,m; j, \mu} & = &
\overline{E_{j, \mu; \ell, m}} \, ,  \label{eq:Eherm} \\
E_{j,\mu;\ell,m} & = &
 \sum_{k=0}^{K} \left[
   D_{ \ell,m; k, 0; j,\mu} v_{k,0} + 
   2 \sum_{\nu=1}^{k} \Re(D_{ \ell,m; k, \nu; j,\mu} v_{k,\nu}) \right], \label{eq:Ereal} \\
E_{j, \mu; \ell,-m}  & = & 
(-1)^{m-\mu} \overline{E_{j,-\mu;\ell,m}} \, . \label{eq:Enegm}
\end{eqnarray}
\end{lemma}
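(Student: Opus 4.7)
The plan is to prove the four identities in the order in which they are stated, each by a short manipulation of either the integral representation or the explicit Gaunt formula~(\ref{eq:D}).

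For~(\ref{eq:Eint}), I would substitute the expansion~(\ref{eq:v}) of $v$ into the proposed integral $\int_{\Sph}\overline{Y_{j,\mu}}\,Y_{\ell,m}\,v\,\dd\br$ and interchange summation and integration. The resulting Gaunt integrals are precisely the $D_{\ell,m;k,\nu;j,\mu}$ of~(\ref{eq:D}), so the right-hand side collapses to the defining sum~(\ref{eq:Edefined}); this is essentially the computation in~(\ref{eq:av}) run backwards.

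For~(\ref{eq:Eherm}), take the complex conjugate of (\ref{eq:Eint}). Since $v$ is real, $\overline{v}=v$, and conjugation interchanges $\overline{Y_{j,\mu}}\,Y_{\ell,m}$ with $Y_{j,\mu}\,\overline{Y_{\ell,m}}$; applying~(\ref{eq:Eint}) once more with the pairs $(j,\mu)$ and $(\ell,m)$ swapped identifies the result as $E_{\ell,m;j,\mu}$.

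For~(\ref{eq:Ereal}), I would first note that reality of $v$ together with $\overline{Y_{k,\nu}}=(-1)^\nu Y_{k,-\nu}$ yields $v_{k,-\nu}=(-1)^\nu\overline{v_{k,\nu}}$. Split the $\nu$-sum in~(\ref{eq:Edefined}) into the term $\nu=0$ and the pairs $(\nu,-\nu)$ with $\nu\ge 1$. The Gaunt factor $D_{\ell,m;k,\nu;j,\mu}$ is manifestly real by~(\ref{eq:D}), and the standard 3j-sign identity under simultaneous sign-flip of all lower entries, combined with the fact that the $(0,0,0)$-row 3j vanishes unless $\ell+k+j$ is even, lets me collapse each paired contribution into $2\Re(D_{\ell,m;k,\nu;j,\mu}\,v_{k,\nu})$.

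For~(\ref{eq:Enegm}), apply~(\ref{eq:Eint}) to both sides. On the left, $Y_{\ell,-m}=(-1)^m\overline{Y_{\ell,m}}$ produces a factor $(-1)^m$ and leaves $\int_{\Sph}\overline{Y_{j,\mu}}\,\overline{Y_{\ell,m}}\,v\,\dd\br$. On the right, conjugation combined with $Y_{j,-\mu}=(-1)^\mu\overline{Y_{j,\mu}}$ and $\overline{v}=v$ produces $(-1)^\mu\int_{\Sph}\overline{Y_{j,\mu}}\,\overline{Y_{\ell,m}}\,v\,\dd\br$; the ratio of the two prefactors is $(-1)^{m-\mu}$.

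The main obstacle will be~(\ref{eq:Ereal}): collapsing each $(\nu,-\nu)$ pair to $2\Re(\cdot)$ requires simultaneous use of the reality condition $v_{k,-\nu}=(-1)^\nu\overline{v_{k,\nu}}$, the 3j sign under sign-flip of lower entries, and the selection rule $m+\nu=\mu$ from~(\ref{eq:Dprops}), so the sign bookkeeping needs to be done carefully. Once that identity is in hand, the other three identities are short consequences of the integral representation~(\ref{eq:Eint}).
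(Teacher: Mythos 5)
Your treatments of \eqref{eq:Eint}, \eqref{eq:Eherm} and \eqref{eq:Enegm} are correct and essentially coincide with the paper's proof: \eqref{eq:Eint} is the paper's computation run in reverse (expanding $v$ under the integral rather than resumming the Gaunt integrals), and \eqref{eq:Enegm} is exactly the paper's one-line conjugation argument. For \eqref{eq:Eherm} you are in fact more direct than the paper, which detours through a conjugation symmetry of the Gaunt coefficients $D$ and the definition \eqref{eq:Edefined}; your argument, conjugating the integral representation and using only $\overline{v}=v$, is cleaner and equally valid.

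The gap is in \eqref{eq:Ereal}, precisely where you anticipated trouble, and careful bookkeeping shows the step fails rather than merely being delicate. The standard 3j sign identity flips \emph{all three} lower entries at once: for $\ell+k+j$ even it yields $D_{\ell,-m;k,-\nu;j,-\mu}=D_{\ell,m;k,\nu;j,\mu}$, i.e.\ it relates the $\nu$ and $-\nu$ terms only after also negating $m$ and $\mu$. It gives no relation between $D_{\ell,m;k,-\nu;j,\mu}$ and $D_{\ell,m;k,\nu;j,\mu}$ with $(m,\mu)$ held fixed. Worse, the selection rule $m+\nu=\mu$ in \eqref{eq:Dprops} shows that for $\nu\ge 1$ these two coefficients can never be simultaneously nonzero, so within each $(\nu,-\nu)$ pair at most one summand survives and there is nothing to collapse; since $x=2\Re(x)$ forces $x=0$, the proposed collapse can only hold when the terms vanish. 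Indeed \eqref{eq:Ereal} itself is false off the diagonal $\mu=m$: taking $v=Y_{1,1}+\overline{Y_{1,1}}$ (so $v_{1,1}=1$, $v_{1,-1}=-1$), the representation \eqref{eq:Eint} gives $E_{2,1;1,0}=D_{1,0;1,1;2,1}\neq 0$, whereas the right-hand side of \eqref{eq:Ereal} equals $2D_{1,0;1,1;2,1}$. This defect is shared by the paper's own proof, which rests on the intermediate identity $D_{\ell,m;k,-\nu;j,\mu}=(-1)^\nu\overline{D_{\ell,m;k,\nu;j,\mu}}$, itself incompatible with the same selection rule. The identity \eqref{eq:Ereal} is correct in the case $\mu=m$, where only $\nu=0$ contributes and $D_{\ell,m;k,0;j,\mu}v_{k,0}$ is real; that is all that is needed for the axially symmetric masks of Section~\ref{sec:axsym}. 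A correct general statement must retain both members of each pair:
\begin{equation*}
E_{j,\mu;\ell,m}=\sum_{k=0}^{K}\Bigl[D_{\ell,m;k,0;j,\mu}v_{k,0}
+\sum_{\nu=1}^{k}\bigl(D_{\ell,m;k,\nu;j,\mu}v_{k,\nu}
+(-1)^{\nu}D_{\ell,m;k,-\nu;j,\mu}\overline{v_{k,\nu}}\bigr)\Bigr].
\end{equation*}
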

\begin{proof}
Firstly, from the definition of $E$ in \eqref{eq:Edefined} we have
\begin{align}
    E_{j,\mu;\ell,m} &=
    %\sum_{k=0}^{K}\sum_{\nu = -k}^k
    % D_{\ell,m;k,\nu; j,\mu} v_{k,\nu}\\
    \sum_{k=0}^{K} \sum_{\nu = -k}^k \int_{\Sph}
    Y_{\ell, m}(\br) Y_{k, \nu}(\br)
    \overline{Y_{j, \mu}(\br)}
    \, \dd S(\br)\: v_{k, \nu}\nonumber\\
    &= \int_{\Sph}
    \overline{Y_{j, \mu}(\br)}
    Y_{\ell, m}(\br) \sum_{k=0}^{K}\sum_{\nu = -k}^k
    v_{k, \nu} Y_{k, \nu}(\br) \,\dd S(\br)\nonumber\\
    &= \int_{\Sph}
    \overline{Y_{j, \mu}(\br)} Y_{\ell,m }(\br)
    \vK(\br) \, \dd S(\br),\nonumber
\end{align}
establishing \eqref{eq:Eint}.
From the  definition of $D_{\ell,m;k,\nu;j,\mu}$ in \eqref{eq:D} as an integral, together with the spherical harmonic property 
\begin{equation}\label{eqn-Yell-m}
Y_{\ell,-m}(\br) = (-1)^m \overline{Y_{\ell,m}(\br)},    
\end{equation}
it follows that
\[
D_{j,\mu; k,\nu;\ell,m;} = (-1)^\nu \overline{D_{\ell,m;k,-\nu;j,\mu}}.
\]
Because  both the mask $v$ and the field $a$ are real we have
%we note that the fields $a(\br)$ and $v(\br)$ are %real, from which it follows (given $Y_{\ell,m}(\br) ^= \overline{Y_{\ell, -m}(\br)})$ that
\begin{equation}\label{eq:a_symm}
a_{\ell,m} = (-1)^m \overline{a_{\ell,-m}},\quad
v_{k,\mu} = (-1)^\mu \overline{v_{k,-\mu}},
\end{equation}
for all relevant values of $\ell, m, k$ and $\mu$,
and \eqref{eq:Eherm} then follows from \eqref{eq:Edefined}.
Also, from \eqref{eq:D} and %\eqref{eq:a_symm},
\eqref{eqn-Yell-m},
\[
D_{ \ell,-m; k, \nu; j,\mu}  =
(-1)^m \overline{D_{ \ell,m; k, \nu; j,\mu}}, \quad
D_{ \ell,m; k, -\nu; j,\mu}  =
(-1)^\nu \overline{D_{ \ell,m; k, \nu; j,\mu}},
\]
so \eqref{eq:a_symm} gives
\[
  D_{ \ell,m; k, -\nu; j,\mu}v_{k,-\nu} =
\overline{D_{ \ell,m; k, \nu; j,\mu} v_{k,\nu}},
\]
and \eqref{eq:Edefined} then yields \eqref{eq:Ereal}.
Finally, for a real mask \eqref{eq:Enegm} follows easily from \eqref{eq:Eint}.
 
\end{proof}

\subsection{Singular values of $E$}
This subsection gives upper bounds on the singular values of the rectangular matrix $E$ in terms of the real mask $v$.
We use $E^*$ to denote the complex conjugate transpose of the matrix $E$.

\begin{theorem}\label{thm:svbnd}
Assume that the mask $v(\br)$ is approximated by the
partial sum of its spherical Fourier series of degree
$K \ge 1$
%of degree $K\geq 1$
and that $L \leq J\le L+K$ is the degree of the approximation
$\avJ(\br)$ to the masked field $a^v(\br)$,
so that $E$ is a $(J+1)^2$ by $(L+1)^2$ matrix.
The singular values $\sigma$ of $E$ satisfy
\begin{equation}\label{eq:svbnd}
    0 \leq \sigma \leq \vKmaxabs,
\end{equation}
where
\[
\vKmaxabs := \max_{\br\in \Sph}\; |v(\br)| .
\]
\end{theorem}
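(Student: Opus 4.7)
The plan is to interpret the action of $E$ on a coefficient vector in the spatial domain: $E\ba$ collects the first $(J+1)^2$ spherical harmonic coefficients of the pointwise product $v\cdot a$, where $a$ is the spherical polynomial whose coefficient vector is $\ba$. Combined with Parseval's identity on $\Sph$, the bound $\sigma \le \vKmaxabs$ then follows from the trivial pointwise inequality $|v(\br) a(\br)|^2 \le \vKmaxabs^2 |a(\br)|^2$, and $\sigma \ge 0$ is automatic.

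Concretely, given $\ba \in \mathbb{C}^{(L+1)^2}$ indexed by $(\ell,m)$ with $\ell \le L$, define
\[
a(\br) := \sum_{\ell=0}^{L}\sum_{m=-\ell}^{\ell} a_{\ell,m} Y_{\ell,m}(\br).
\]
Using the integral representation \eqref{eq:Eint} and linearity,
\[
(E\ba)_{j,\mu} \;=\; \sum_{\ell=0}^{L}\sum_{m=-\ell}^{\ell} E_{j,\mu;\ell,m}\, a_{\ell,m} \;=\; \int_{\Sph} \overline{Y_{j,\mu}(\br)}\, v(\br)\, a(\br)\, \dd\br,
\]
which is precisely the $(j,\mu)$ spherical harmonic Fourier coefficient of the product $va$. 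Since $v$ has degree at most $K$ and $a$ has degree at most $L$, the function $va$ is a spherical polynomial of degree at most $L+K$, and since $J \le L+K$ the vector $E\ba$ contains a subset of its nonzero Fourier coefficients.

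Applying Parseval to $va$ and then to $a$,
\[
\|E\ba\|_2^2 \;=\; \sum_{j=0}^{J}\sum_{\mu=-j}^{j} |(E\ba)_{j,\mu}|^2 \;\le\; \sum_{j=0}^{L+K}\sum_{\mu=-j}^{j} |(E\ba)_{j,\mu}|^2 \;=\; \int_{\Sph} |v(\br)|^2 |a(\br)|^2\, \dd\br,
\]
and the pointwise bound gives
\[
\int_{\Sph} |v(\br)|^2 |a(\br)|^2\, \dd\br \;\le\; \vKmaxabs^2 \int_{\Sph} |a(\br)|^2\, \dd\br \;=\; \vKmaxabs^2 \|\ba\|_2^2.
\]
Thus $\|E\ba\|_2 \le \vKmaxabs \|\ba\|_2$ for every $\ba$, which is equivalent to the largest singular value being at most $\vKmaxabs$, proving \eqref{eq:svbnd}.

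There is no real obstacle here: the only step requiring a moment's care is recognising that the column dimension $(L+1)^2$ matches the dimension of the space of spherical polynomials of degree $\le L$ (so that the Parseval identity for $\ba$ is exact), and that $J \le L+K$ ensures $\|E\ba\|_2^2$ is bounded by the full squared $L^2$-norm of $va$ rather than only a truncation thereof. The rest is a two-line application of Parseval and the supremum bound on $v$.
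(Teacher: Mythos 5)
Your proof is correct and follows essentially the same route as the paper's: both use the integral representation \eqref{eq:Eint} to identify $E\ba$ with the spherical harmonic coefficients of the product $v\,a$, then apply Parseval together with the pointwise bound $|v| \le \vKmaxabs$; the only cosmetic difference is that you bound $\|E\ba\|_2/\|\ba\|_2$ for arbitrary $\ba$ while the paper evaluates the Rayleigh quotient at an eigenvector of $E^*E$. If anything, your version is slightly more careful, since for $J < L+K$ the sum over $j \le J$ is only bounded by (not equal to) $\|v\,a\|_{L^2}^2$, a distinction the paper's proof glosses over by invoking Parseval as an equality.
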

\begin{proof}
Let $\bu\neq \bzero$ be an eigenvector of
the positive semi-definite Hermitian matrix $E^*E$ corresponding to the non-negative real eigenvalue $\sigma^2$,  so $E^* E \bu = \sigma^2 \bu$.
Then, using \eqref{eq:Eint}, and writing the elements of
$\bu$ as $u_{\ell,m}, \ell = 0,\ldots,L,\; m = -\ell,\ldots, \ell$, we have
\[
  (E \bu)_{j, \mu} = 
  \sum_{\ell=0}^L \sum_{m=-\ell}^\ell u_{\ell,m}
  \int_{\Sph} \overline{Y_{j,\mu}(\br)}
  Y_{\ell,m}(\br)v(\br) \dd S(\br)
   = \int_{\Sph} \overline{Y_{j,\mu}(\br)}
  u(\br) v(\br) \dd S(\br), 
\]
for $j = 0,\ldots,J$, $\mu = -j,\ldots,j$, where 
\[
\  u(\br) :=  \sum_{\ell=0}^L \sum_{m=-\ell}^\ell
   u_{\ell,m} Y_{\ell,m} (\br),
\]
giving
\[
  \bu^* E^* E \bu = \|E\bu\|_{\ell_2}^2  =
  \sum_{j = 0}^J \sum_{\mu=-j}^j
  \left| \int_{\Sph} u(\br) v(\br) \overline{Y_{j,\mu}(\br)} \dd S(\br) \right|^2.
\]
Thus, using Parseval's identity for $u\, v$ and then $u$,
\begin{align}
\label{eq:svsq}
  \sigma^2 \| \bu \|_{\ell_2}^2 =
  \bu^* (\sigma^2 \bu) = \bu^* (E^* E \bu) &  =
\sum_{j = 0}^J \sum_{\mu=-j}^j
  \left| \int_{\Sph} u(\br) v(\br) \overline{Y_{j,\mu}(\br)} \dd S(\br) \right|^2 \\
  & = \| u \; v \|^2_{L^2} \nonumber \\
  & \leq  (\vKmaxabs)^2 \| u \|_{L^2}^2
  = (\vKmaxabs)^2 \| \bu \|_{\ell^2}^2 . \nonumber
\end{align}
This gives the upper bound \eqref{eq:svbnd}
on the singular values $\sigma$ of $E$.

%where the inequality is strict because $\vK$,
%being a spherical polynomial of non-zero degree,
%cannot be identically equal to its maximum value.
%Similarly, we have a lower bound
%\[
%\sigma^2 \|\bu\|_{\ell_2}^2 \geq \vKminsq \|\bu\|_{\ell_2}^2
%\]
%together  proving $\sigma^2 \in  [\vKminsq, \vKmaxsq)$.
%Note that if $v$ is negative at any point $\br \in \Sph$,
%then continuity of $v$ implies $\vKminsq = 0$.
\end{proof}

\subsection{Eigenvalues of $E$}
In this subsection we take $J = L$, making the matrix $E$ square. From the second statement in Lemma~\ref{lem:Eprop}, $E$ is Hermitian thus its eigenvalues are real, and
eigenvectors belonging to distinct eigenvalues are orthogonal.

Let $\lambda\in \R$ be an eigenvalue of $E$ and let $\bq\neq \bzero$ be a
corresponding eigenvector, thus
\begin{equation*}
    E \bq = \lambda\bq.
\end{equation*}
The following result provides both lower and upper bounds on $\lambda$,
in terms of the minimum and maximum values of the mask $v$.

\begin{theorem}\label{thm:evbnd}
Assume that the mask $v$ is approximated by the
partial sum of its spherical Fourier series of degree $K\geq 1$.
Assume also that $J = L$, so that the matrix $E$ is square. Then the
eigenvalues of $E$ lie in the interval $(\vKmin, \vKmax)$, where
\[
\vKmin := \min_{\br\in \mathbb{S}^2}\;v(\br),
\qquad
\vKmax := \max_{\br\in \mathbb{S}^2}\; v(\br).
\]
\end{theorem}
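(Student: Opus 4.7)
The plan is to exploit the Hermitian property of $E$ (established in Lemma~\ref{lem:Eprop}) via a Rayleigh-quotient argument, transferring the eigenvalue problem from the coefficient side to the function side through the integral representation~\eqref{eq:Eint}. Since $E$ is Hermitian, $\lambda$ is real, and for the eigenvector $\bq \neq \bzero$ we have $\lambda\,\|\bq\|_{\ell^2}^2 = \bq^* E\bq$. Writing $q(\br) := \sum_{\ell=0}^L \sum_{m=-\ell}^\ell q_{\ell,m} Y_{\ell,m}(\br)$ and substituting \eqref{eq:Eint} into the double sum defining $\bq^* E \bq$, the $Y_{\ell,m}$ assemble into $q(\br)$ and the $\overline{Y_{j,\mu}}$ into $\overline{q(\br)}$, giving
\[
\bq^* E \bq \;=\; \int_{\Sph} |q(\br)|^2\, v(\br)\, \dd\br.
\]
Parseval's identity then yields $\|\bq\|_{\ell^2}^2 = \|q\|_{L^2}^2 = \int_{\Sph} |q(\br)|^2 \dd\br$, so
\[
\lambda \;=\; \frac{\int_{\Sph} |q(\br)|^2 v(\br)\,\dd\br}{\int_{\Sph} |q(\br)|^2 \dd\br}.
\]

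From this Rayleigh-quotient form the non-strict bounds $\vKmin \le \lambda \le \vKmax$ follow immediately from the pointwise inequalities $\vKmin \le v(\br) \le \vKmax$ and the non-negativity of $|q|^2$.

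The main obstacle is to upgrade these to strict inequalities, so that $\lambda$ lies in the \emph{open} interval $(\vKmin,\vKmax)$. Here is where the hypothesis $K \ge 1$ enters. Suppose toward contradiction that $\lambda = \vKmax$. Then
\[
\int_{\Sph} |q(\br)|^2\bigl(\vKmax - v(\br)\bigr)\, \dd\br = 0,
\]
and since the integrand is continuous and non-negative, it must vanish identically on $\Sph$. Because $q$ is a nonzero spherical polynomial of degree at most $L$, it is real-analytic, and its zero set has empty interior; hence there is a nonempty open set on which $|q|^2 > 0$, and therefore $v \equiv \vKmax$ on that open set. But $v$ is itself a spherical polynomial (degree $K$) and thus real-analytic on $\Sph$; being constant on a nonempty open set, it must be constant on all of $\Sph$, which contradicts $K \ge 1$. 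A symmetric argument, considering $\int |q|^2 (v - \vKmin)$, rules out $\lambda = \vKmin$. Together these establish $\lambda \in (\vKmin, \vKmax)$, completing the proof.
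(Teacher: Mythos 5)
Your proposal is correct and follows essentially the same route as the paper: both transfer the quadratic form $\bq^* E \bq$ to the integral $\int_{\Sph} |q(\br)|^2 v(\br)\,\dd\br$ via \eqref{eq:Eint} and then bound it pointwise by $\vKmin$ and $\vKmax$. If anything, your treatment of strictness is more complete than the paper's, which asserts the strict inequality solely from the non-constancy of $v$, whereas your real-analyticity argument also rules out the (a priori conceivable) scenario in which $q$ vanishes exactly where $v$ falls below its maximum.
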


\begin{proof}
%From \eqref{eq:Egeneral} and \eqref{eq:D} we have
%\begin{align}\label{eq:EintEV}
%    E_{j,\mu;\ell,m} &= \sum_{k=0}^{K}\sum_{\nu = -k}^k D_{\ell,m;k,\nu; j,\mu}
%    v_{k,\nu}\\
%    &= \sum_{k=0}^{K}\sum_{\nu = -k}^k\int_{\Sph}
%    \overline{Y_{j, \mu}(\br)}Y_{\ell, m}(\br) Y_{k, \nu}(\br) \dd \br\: v_{k, \nu}\nonumber\\
%    &= \int_{\Sph}
%    \overline{Y_{j, \mu}(\br)}
%    Y_{\ell, m}(\br) \sum_{k=0}^{K}\sum_{\nu = -k}^k
%    v_{k, \nu} Y_{k, \nu}(\br) \dd \br\nonumber\\
%    &= \int_{\Sph}
%    \overline{Y_{j, \mu}(\br)}Y_{\ell,m }(\br) \vK(\br) \dd \br.\nonumber
%\end{align}
Let $\lambda \in \R$ be an eigenvalue of $E$, with corresponding eigenvector
$\bq\in \mathbb{C}^{(L+1)^2}$.
Then from \eqref{eq:Eint},
\begin{align}\label{eq:xEx}
    \bq^* E \bq
    &= \sum_{j=0}^{L}\sum_{\mu=-j}^j\sum_{\ell=0}^L \sum_{m = -\ell}^\ell
     \overline{q_{j,\mu}}E_{j,\mu;\ell,m}q_{\ell,m}\\
    &= \sum_{j=0}^{L}\sum_{\mu=-j}^j\sum_{\ell=0}^L \sum_{m = -\ell}^\ell
     \overline{q_{j,\mu}} q_{\ell,m} \int_{\Sph} \overline{Y_{j, \mu}(\br)}Y_{\ell, m}(\br)
    v(\br)\, \dd S(\br)\nonumber\\
    &= \int_{\Sph} \overline{q(\br)} q(\br)
    v(\br)\dd S(\br),\nonumber
   % &= \int_{\Sph} |u(\br)|^2 v_{K}(\br)\dd \br\nonumber\\
   % &\le \|z_J\|_{L_2}^2 v_{K}^{\rm max}\nonumber,
\end{align}
where
\begin{equation}\label{eq:z_J}
    q(\br) := \sum_{\ell=0}^L
    \sum_{m= -\ell}^\ell
    q_{\ell,m} Y_{\ell,m}(\br).
\end{equation}
It follows that
\begin{align}\label{eq:lambda_argument}
\lambda \|\bq\|_{\ell_2}^2 &=  \bq^*(\lambda
\bq) = \bq^* E \bq
=\int_{\Sph} |q(\br)|^2 \vK(\br)\, \dd S(\br)\\
&< \| q \|_{L_2}^2 \, \vKmax
= \sum_{j=0}^{J}\sum_{\mu = -j}^j |q_{j,\mu}|^2 \,\vKmax
= \|\bq\|_{\ell_2}^2\, \vKmax \nonumber,
\end{align}
where the inequality is strict because $\vK$,
being a spherical polynomial of non-zero degree,
cannot be identically equal to either its maximum or
minimum value.  Similarly, we have a lower bound
\[
\lambda \|\bq\|_{\ell_2}^2 >\|\bq\|_{\ell_2}^2\, \vKmin,
\]
together  proving $\lambda \in  (\vKmin, \vKmax)$.
\end{proof}

Note that even if the true (non-polynomial) mask  lies in $[0, 1]$ for all $\br\in\Sph$, the Gibbs phenomenon
will typically produce oscillations in $v$, making $\vKmin < 0$ and $\vKmax > 1$.

We also note in passing that $q$ is an eigenvector belonging to $\lambda$ for
the integral equation
\[
\int_{\mathbb{S}^2}{\mathcal K}_{L}(\br,\br') q(\br')  \,\dd \br'
= \lambda\, q(\br),
\]
where ${\mathcal K}_{L}(\br,\br')$ is the integral kernel given by
\begin{align}\label{eq:kernel}
{\mathcal K}_{L}(\br,\br')
 &= \sum_{\ell=0}^{L}\sum_{m = -\ell}^\ell
  Y_{\ell, m}(\br)\overline{Y_{\ell, m}(\br')} v(\br')\\
&= \sum_{\ell=0}^{L}\frac{2\ell+1}{4\pi}
  P_\ell(\br\cdot \br') v(\br'),\nonumber
\end{align}
and in the last step we used the addition theorem for spherical harmonics.
Here $P_\ell$ is the Legendre polynomial of degree $\ell$,
normalised so that $P_\ell(1) = 1$.

\section{Solving $E\ba = \ba^{v}$} \label{sec:Solving}
As with any ill-posed system, it is essential to build in
\emph{a priori} knowledge of the solution.
Neumaier~\cite[Section~8]{neumaier1998solving},
knowing that the true solution of an ill-posed problem is generally smooth, controls the smoothness
through a smoothing operator $S$. %But 
However, in this problem a smoothing operator would not be appropriate because the solution is the opposite of smooth, since for
each $\ell,m$ the unknown quantity $a_{\ell,m}$ is a realisation of an
independent random variable. That is a property we must build into the
solution. Accordingly, we assume, in accordance with the usual assumptions
for the CMB, that the $a_{\ell,m}$ are mean-zero uncorrelated random
variables with covariance $(C_\ell)_{\ell=0}^L$, where $C_\ell$ is real. Details on using Gaussian random fields to model the CMB can be found in the book by Marinucci and Peccati \cite{Marinucci_Peccati2011}.
% Define the vector of variables
% \[
%   \ba = (a_{\ell,m}, \ell = 0,\ldots,L,
%   m = -\ell,\ldots,\ell) \in \mathbb{C}^{(L+1)^2}.
% \]
%Thus the solution $\ba$ is of length $(L+1)^2$.

We allow general $J$ in the range
$L \le J \le L+K$, giving a linear system with $(J+1)^2$ equations, so typically an over-determined linear system with more equations than unknowns, implying that an exact solution does not in general exist.

Moreover, we assume that the original field coefficients  $a_{\ell,m}$ are corrupted by noise, so the actual model is
\begin{equation}\label{eq:perturbed}
    %E\ba_\beps = \ba^{v} + \beps,
    E\ba_\beps = \ba^{v} + \beps^{v} = (\ba+\beps)^{v}
   % = (\ba + \beps)v
   ,
\end{equation}
where $\beps$ is a vector of independent mean-zero random variables
$\varepsilon_{\ell, m}$ with a diagonal covariance matrix $\eCov$, and $\ba_\beps$ is an approximation to $\ba$.
We also assume that the $\varepsilon_{\ell,m}$ and the $a_{\ell,m}$ are all statistically independent, so that in terms of expected values we have
\begin{align}\label{eq:expect_quad}
    &\inner{\varepsilon_{\ell,m}} = 0, \quad \inner{a_{\ell,m}} = 0 ,  \nonumber \\
    &\inner{\varepsilon_{\ell, m} \overline{a_{\ell',m'}}} = 0,
    \quad \inner{a_{\ell,m}\overline{a_{\ell',m'}}} = C_{\ell}\delta_{\ell, \ell'}\delta_{m,m'} \, ,\\
    &\inner{\varepsilon_{\ell, m} \overline{\varepsilon_{\ell',m'}}}
    = \eCov_{\ell,m} \delta_{\ell, \ell'}\delta_{m,m'} \nonumber.
\end{align}
We deduce the following
expectations of quadratic forms:
\begin{align}\label{eq:expect_q}
    \inner{\ba \ba^*} & = \aCov\nonumber\\
   \inner{\ba^{v} \ba^* }&= \inner{(E\ba)\ba^*} =  E \nonumber \aCov\\
    \inner{\ba^v(\ba^{v})^{*}} &= \inner{(E\ba)(\ba^{*}E^{*})}
    = E \aCov E^*\\
    \inner{\beps \ba^*} & = \bzero \nonumber \\
    \inner{\beps \beps^*} & = \eCov, \nonumber\\
  \inner{\beps^v (\beps^v)^*} & = E \eCov E^{*}, \nonumber
\end{align}
where 
\begin{equation}\label{eq:Cmat}
  \aCov_{\ell,m;\ell',m'} = C_{\ell}\delta_{\ell,\ell'}\delta_{m,m'} .
\end{equation}

Let $\Lambda\in\mathbb{C}^{(L+1)^2 \times (L+1)^2}$ be a real symmetric-positive definite matrix, with
associated norm $\| \ba \|_\Lambda = \left(\ba^* \Lambda
\ba\right)^\frac{1}{2}$ defined by
\begin{equation} \label{E:Norm}
\| \ba \|_\Lambda^2 = \ba^* \Lambda \ba =
\tr\left[ \ba \ba^* \Lambda\right] =
\tr\left[ \Lambda \ba \ba^* \right],
\end{equation}
where we used the matrix property $\mathrm{tr}(AB)= \mathrm{tr}(BA)$.
The following theorem gives a condition for minimising the expected squared $\Lambda$-norm error of an approximate solution of \eqref{eq:perturbed}.  It is an extension/specialisation of
\cite[Theorem~8]{neumaier1998solving}, which that author attributes to
\cite{Bertero1980}.

\begin{theorem}\label{thm:bagen}
Consider the over-determined linear system $E \ba_\beps = \ba^v + \beps^v$, where $\ba^v = E\ba$,
$\beps^v = E \beps$  and $\ba$ and $\beps$ have the stochastic properties in
\eqref{eq:expect_q}. Assume that the $(J+1)^2 \times  (L+1)^2$ matrix $E$ has full rank $(L+1)^2$. Among all  approximations of the form
{$\ba_\beps \approx Q(\ba^{v}+ \beps^v)$}, where $Q$ is a non-random $(L+1)^2 \times (J+1)^2$ matrix, the expected squared error $\inner{\|\ba-Q 
(\ba^v+\beps^v)\|_\Lambda^2}$ is minimized by any solution $\Qmin$ of the equation 
\begin{equation}\label{eq:Qmin}
\Qmin E (\aCov + \eCov) = \aCov.
\end{equation}
 The resulting minimum expected squared error is
\begin{equation}\label{eq:msegen}
    \inner{\| \ba-\amin \|_\Lambda^2} =
    \tr\left[\Lambda \bigl( \aCov -\aCov(\aCov+\eCov)^{-1}\aCov \bigr)\right],
\end{equation}
where $\amin:= \Qmin(\ba^v+\beps^v)$.
\end{theorem}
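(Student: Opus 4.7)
The plan is to reduce the problem to minimizing a trace functional $f(Q)$ over the matrix $Q$, then verify that the proposed $\Qmin$ realises the claimed value, and finally confirm optimality by completing the square.

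First, I would substitute $\ba^v = E\ba$ and $\beps^v = E\beps$, so that the residual becomes
\[
\ba - Q(\ba^v+\beps^v) = (I-QE)\ba - QE\beps.
\]
Applying \eqref{E:Norm} and the cyclic property of trace, and then taking expectations, the two cross-terms between $\ba$ and $\beps$ vanish by the mean-zero and independence properties in \eqref{eq:expect_q}. Using the remaining identities in \eqref{eq:expect_q}, the surviving terms collapse to
\[
f(Q) := \inner{\|\ba-Q(\ba^v+\beps^v)\|_\Lambda^2} = \tr\bigl[\Lambda\bigl(\aCov - QE\aCov - \aCov E^*Q^* + QE(\aCov+\eCov)E^*Q^*\bigr)\bigr].
\]

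Next, set $M := \aCov + \eCov$. Since $M$ is diagonal with entries $C_\ell + \eCov_{\ell,m}$, which I assume to be strictly positive, $M$ is invertible. Any $\Qmin$ solving the normal equation \eqref{eq:Qmin} then satisfies $\Qmin E = \aCov M^{-1}$; substituting this (together with its adjoint, using $\aCov^* = \aCov$ and $M^* = M$) into $f(\Qmin)$ collapses the three $Q$-dependent terms to $-\aCov M^{-1}\aCov$, which gives exactly \eqref{eq:msegen}.

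To establish optimality I would write $Q = \Qmin + \Delta$ and expand. Using $\Qmin E M = \aCov$ and its adjoint repeatedly, every first-order term in $\Delta$ cancels, leaving
\[
f(Q) - f(\Qmin) = \tr\bigl[\Lambda\,\Delta\, E M E^*\,\Delta^*\bigr] \ge 0,
\]
where the inequality holds because $\Lambda$ is symmetric positive definite while $\Delta E M E^*\Delta^*$ is Hermitian positive semi-definite, so $\tr[\Lambda S] \ge 0$ for any such $S$. The main obstacle is purely the book-keeping in the completion-of-squares step; existence of $\Qmin$ is immediate from the full-rank hypothesis on $E$, since (for example) $\Qmin = \aCov M^{-1}(E^*E)^{-1}E^*$ solves \eqref{eq:Qmin} whenever $J \ge L$.
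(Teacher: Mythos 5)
Your proposal is correct and follows essentially the same route as the paper: both expand the expected squared error as a quadratic form in $Q$, show that the terms linear in the perturbation $\Delta$ (the paper's $R$) vanish precisely when \eqref{eq:Qmin} holds, and conclude by nonnegativity of the remaining quadratic term $\tr\left[\Lambda\,\Delta E(\aCov+\eCov)E^*\Delta^*\right]$, which is the paper's $\inner{\|R\by_\beps\|_\Lambda^2}$. The only differences are organizational: you substitute $\ba^v = E\ba$, $\beps^v = E\beps$ at the outset and use the invertibility of $\aCov+\eCov$ (implicitly required anyway by \eqref{eq:msegen}) to rewrite \eqref{eq:Qmin} as $\Qmin E = \aCov(\aCov+\eCov)^{-1}$, thereby bypassing the paper's equivalence argument between \eqref{eq:Qmin} and \eqref{eq:EOmega}, which is where its full-rank hypothesis on $E$ enters; in your version full rank is needed only to exhibit an explicit solution $\Qmin$.
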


\begin{remark}
The minimizer $\amin$ is in general not unique.
\end{remark}

\begin{proof}
Writing $\by_\beps:= \ba^v + \beps^v$,
a general linear approximation can be written as 
$Q\by_\beps = \Qmin \by_\beps + R \by_\beps$, where $\Qmin$ is an as yet unknown minimizer, and $R = Q - \Qmin$ is a matrix in
$\mathbb{C}^{(L+1)^2 \times (J+1)^2}$.
The mean square error can now be written as 
\begin{align*}
  \|\ba-(\Qmin +R)\by_\beps\|_\Lambda^2
   & = \|\ba-\Qmin \by_\beps\|_\Lambda^2 + \|R\by_\beps\|_\Lambda^2
     - 2\Re\left[\left(\ba-\Qmin \by_\beps)^{*}\right) \Lambda R\by_\beps\right]\\
    & = \|\ba-\Qmin \by_\beps\|_\Lambda^2 + \|R\by_\beps\|_\Lambda^2
      - 2\Re\left[\tr\left(\Lambda R\by_\beps(\ba-\Qmin \by_\beps)^{*}\right)\right].
\end{align*}
On taking expected values and using \eqref{eq:expect_q}
we have
\begin{align*}
\inner{\tr  (\Lambda R \by_\beps (\ba - \Qmin \by_\beps)^*} &= 
\tr  \left(\Lambda R \inner{\by_\beps (\ba - \Qmin \by_\beps)^*} \right) \\
&=\tr  \left(\Lambda R \inner{ (\ba^{v} + \beps^v) (\ba - \Qmin (\ba^v + \beps^v))^*} \right) \\
&=\tr \left( \Lambda R \left(E \inner{\ba \ba^*} - E\inner{\ba \ba^* + \beps \beps^*}\
 E^* \Qmin^* \right)\right)\\
&=\tr \left( \Lambda R \left( E \aCov - E(\aCov  + \eCov) E^* \Qmin^* \right) \right).
\end{align*}
So
\begin{equation*}
\begin{split}
    \inner{\|\ba-(\Qmin +R)\by_\beps\|_\Lambda^2}
    &= \inner{\|\ba-\Qmin \by_\beps\|_\Lambda^2} + \inner{\|R\by_\beps\|_\Lambda^2} \\
    &\quad \qquad
    -2\Re  \tr \left( \Lambda R \left( E \aCov - E(\aCov  + \eCov) E^* \Qmin^* \right) \right) .
\end{split}
\end{equation*}
By definition, $\Qmin $ is a minimizer of 
$\inner{\| \ba - Q {\by_\beps}\|_\Lambda^2}$, so the linear term must vanish for all $R$.  More precisely, we must
have
\begin{equation}\label{eq:EOmega}
E \aCov  = E  (\aCov   + \eCov) E^* \Qmin ^*, \quad \mbox{or equivalently} \quad \Qmin E(\aCov +\eCov)  E^* = \aCov E^*,
\end{equation}
since otherwise by taking $R$ to be 
$\bigl(E \aCov   - E  (\aCov  + \eCov)E^* \Qmin ^*\bigr)^*$ we obtain a contradiction. It is easily seen that the second equality in \eqref{eq:EOmega} is equivalent to \eqref{eq:Qmin}: starting with \eqref{eq:Qmin} by right multiplying by $E^*$, starting with \eqref{eq:EOmega} by right multiplying by $E$ and using the invertibility of $E^* E$, noting that $E^*E$ is a square matrix of full rank $(L+1)^2$.  
If \eqref{eq:Qmin} holds then we have

\begin{align*}
    \inner{\| \ba-(\Qmin +R)\by_\beps\|_\Lambda^2}
    & = \inner{\| \ba-\Qmin \by_\beps \|_\Lambda^2} +
        \inner{\| R\by_\beps\|_\Lambda^2}\\
    & \geq \inner{\| \ba-\Qmin \by_\beps \|_\Lambda^2}
\end{align*}

with equality for $R = 0$, corresponding to
$\amin  = \Qmin \by_\beps$.

The expected squared error is
\begin{align*}
  \inner{\|\ba-\amin \|_\Lambda^2} & =
  \inner{\| \ba-\widehat{Q}\by_\beps \|_\Lambda^2}\\
  &= \inner{\bigl(\ba-\widehat{Q}\by_\beps\bigr)^* \Lambda \bigl(\ba-\Qmin\by_\beps\bigr)}\\
  &= \tr\left[ \Lambda \inner{\bigl(\ba-\Qmin\by_\beps\bigr) \bigl(\ba-\Qmin\by_\beps\bigr)^*}\right]\\
  &= \tr\left[\Lambda\left(\inner{\ba\ba^*}+\inner{\Qmin\by_\beps\by_\beps^{*}\Qmin^*}-2\Re\bigl(\Qmin\inner{\by_\beps\ba^*}\bigr)\right)\right]\\
  &= \tr\left[\Lambda\left( \aCov+\widehat{Q} (E \aCov E^*+ E\eCov E^*)\widehat{Q}^* - 2\Re\bigl(\widehat{Q}E \aCov\bigr)\right)\right],
    \end{align*}

Now by \eqref{eq:EOmega}
\begin{equation*}
    \Qmin E \aCov = \Qmin(E \aCov E^*+ E \eCov E^*)\Qmin^*,
\end{equation*}
and hence
\begin{align*}
\tr[ \Lambda \Qmin E \aCov]
&= \tr [\Lambda \Qmin E (\aCov +\eCov) E^*\Qmin^*]\\
&= \tr[(\aCov +\eCov)^{1/2}E^*\Qmin^*\Lambda \Qmin E (\aCov +\eCov)^{1/2}]\\
&= \|\Qmin E (\aCov +\eCov)^{1/2}]\|_{\Lambda}^2,
\end{align*}
which is real, implying
\begin{align*}
  \inner{\|\ba-\amin \|_\Lambda^2}
  &= \tr\left[\Lambda\left( \aCov-\widehat{Q}E \aCov \right)\right]\\
  &=\tr\left[\Lambda\left( \aCov- \aCov(\aCov +\eCov)^{-1}\aCov \right)\right].\\
\end{align*}

\end{proof}
 
\begin{remark}
Note that the equation \eqref{eq:EOmega} determining the minimizer $\widehat{Q}\by_\beps$ of the expected mean-square error
does not depend on the matrix $\Lambda$, i.e. on the choice of
quadratic norm. For example, using $\Lambda = I$ or $\Lambda =  \aCov$
does not change $\Qmin $.
\end{remark}

\begin{corollary}
Under the conditions of Theorem  \ref{thm:bagen}, let $\Gamma$ be an arbitrary positive definite matrix of size $(J+1)^2 \times (J+1)^2$.  Then a vector $\widehat{\ba}\in \mathbb{R}^{(L+1)^2}$ that achieves the minimal error given in \eqref{eq:msegen} is 
\begin{equation}\label{eq:afromalpha}
\widehat{\ba} := \aCov(\aCov + \eCov)^{-1}\balpha,
\end{equation}
where $\balpha\in \mathbb{R}^{(L+1)^2}$ is the unique solution of
\begin{equation}\label{eq:alpha}
E^* \Gamma E \balpha = E^* \Gamma (\ba^v + \beps^v).
\end{equation}
\end{corollary}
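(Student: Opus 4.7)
The plan is to show that the vector $\widehat{\ba}$ defined by \eqref{eq:afromalpha} and \eqref{eq:alpha} can be written as $\widehat{Q}(\ba^v + \beps^v)$ for some matrix $\widehat{Q}$ satisfying the optimality condition \eqref{eq:Qmin} in Theorem \ref{thm:bagen}; it will then automatically attain the minimum expected squared error \eqref{eq:msegen}.

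First I would verify that the prescription is well defined. Because $E$ has full column rank $(L+1)^2$ and $\Gamma$ is positive definite, the matrix $E^* \Gamma E$ is Hermitian positive definite and thus invertible, so \eqref{eq:alpha} has the unique solution
\[
\balpha = (E^* \Gamma E)^{-1} E^* \Gamma (\ba^v + \beps^v).
\]
Substituting into \eqref{eq:afromalpha} gives $\widehat{\ba} = \widehat{Q}(\ba^v + \beps^v)$ with
\[
\widehat{Q} := \aCov (\aCov + \eCov)^{-1} (E^* \Gamma E)^{-1} E^* \Gamma,
\]
which is a nonrandom $(L+1)^2 \times (J+1)^2$ matrix, as required by Theorem \ref{thm:bagen}. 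Here I implicitly use that $\aCov + \eCov$ is invertible, which holds because it is diagonal with strictly positive entries (assuming $C_\ell > 0$; this is built into the setting).

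Next I would perform the one-line algebraic check that this $\widehat{Q}$ satisfies the optimality equation \eqref{eq:Qmin}:
\begin{align*}
\widehat{Q} E (\aCov + \eCov)
&= \aCov (\aCov + \eCov)^{-1} (E^* \Gamma E)^{-1} (E^* \Gamma E) (\aCov + \eCov) \\
&= \aCov (\aCov + \eCov)^{-1} (\aCov + \eCov) = \aCov.
\end{align*}
By Theorem \ref{thm:bagen}, any $\widehat{Q}$ satisfying \eqref{eq:Qmin} yields an $\amin = \widehat{Q}(\ba^v + \beps^v)$ that attains the minimum expected squared $\Lambda$-norm error \eqref{eq:msegen}, and this applies to our $\widehat{\ba}$.

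There is no real obstacle: the entire content is recognising that the two-stage prescription \eqref{eq:afromalpha}--\eqref{eq:alpha} is an explicit factorisation of a valid minimiser $\widehat{Q}$, with the auxiliary matrix $\Gamma$ appearing only through the invertible factor $E^* \Gamma E$ that cancels in the key identity above. The only point that deserves care is confirming well-definedness (invertibility of $E^*\Gamma E$ and of $\aCov+\eCov$); everything else is the direct cancellation shown.
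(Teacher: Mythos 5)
Your proposal is correct and follows exactly the paper's own route: the paper's proof defines the same matrix $\Qmin = \aCov(\aCov+\eCov)^{-1}(E^*\Gamma E)^{-1}E^*\Gamma$, notes that it satisfies \eqref{eq:Qmin}, and identifies $\Qmin(\ba^v+\beps^v)$ with the two-stage prescription \eqref{eq:afromalpha}--\eqref{eq:alpha}. Your write-up simply supplies the details the paper leaves as ``easily seen'' (invertibility of $E^*\Gamma E$ and the cancellation verifying \eqref{eq:Qmin}), which is a faithful expansion rather than a different argument.
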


\begin{proof}
The matrix $\Qmin$ defined by 
\begin{equation}\label{eq:Qmin_example}
\Qmin :=  \aCov (\aCov+\eCov)^{-1} (E^* \Gamma E)^{-1} E^* \Gamma
\end{equation}
is easily seen to satisfy the condition 
\eqref{eq:Qmin} in Theorem \ref{thm:bagen}.
Equally, it is easily seen that the corresponding minimizer 
\[
\widehat{\ba} := \Qmin(\ba^v + \beps^v)
\]
can be written exactly as stated in the corollary.
\end{proof}

\begin{remark}

The corollary gives our prescription for computing the coefficient vector $\amin$. 
Note that the postprocessing step in \eqref{eq:afromalpha} is easily carried out
given that the matrices $\aCov$ and $\eCov$ are diagonal,
since each element of $\alpha$ is by this step merely reduced by a known factor.
Note also that equation \eqref{eq:alpha} is just the normal equation for the linear system if, as we shall assume in practice, $\Gamma$ is the identity matrix.  Formation of the normal equations can greatly increase the condition number of an already ill-conditioned system.  In practice we shall address the ill-conditioning either by QR factorisation of the matrix $E$, or (less desirably) by adding a regularising term to the right-hand side, to obtain
\begin{equation}\label{eq:alphareg}
(E^* \Gamma E  + \Sigma) \balpha = E^* \Gamma (\ba^v + \beps^v),
\end{equation}
where $\Sigma$ is an empirically chosen positive definite  $(L+1)^2 \times (L+1)^2 $ matrix.     
\end{remark}

The following proposition shows that if the elements of $\ba^v$ and $\beps^v$ have the correct symmetry for real-valued fields $a^v = a v$ and $\epsilon^v = \epsilon v$, then %so too do 
the computed values of $\widehat{\ba}$ also follow the same symmetry. The practical importance of this result is that the symmetry property, since it occurs naturally, does not need to be enforced.

\begin{proposition}
Assume that the components of $\ba^v$ and  $\beps^v$ satisfy
\[
  a^v_{j,\mu}= (-1)^\mu \overline{a^v_{j,-\mu}}
  \quad\mbox{and}\quad
  \varepsilon_{j,\mu}^v = (-1)^\mu \overline{\varepsilon}_{j,-\mu}^v,
  \quad \mu = -j, \ldots j, \; j\ge 0.
\]
  Assume also that the positive definite matrices $\aCov, \eCov$ and $\Gamma$, and also $\Sigma$ if present, are all diagonal, and that their diagonal elements are positive numbers independent of the second label $\mu$ or $m$.  Then $\amin$ given by \eqref{eq:alpha} and \eqref{eq:afromalpha} satisfies 
\[
\amin_{\ell,m} = (-1)^m \overline{\amin_{\ell, -m}},\quad
m = -\ell,
\ldots,\ell,\; \ell \ge 0.
\]
\end{proposition}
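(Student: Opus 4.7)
My plan is to phrase the statement as equivariance under a natural reality involution on coefficient vectors, and to verify that every operator appearing in the construction of $\amin$ commutes with that involution. For a vector $\mathbf{w}\in\mathbb{C}^{(L+1)^2}$ with components $w_{\ell,m}$, $0\le\ell\le L$, $-\ell\le m\le\ell$, define the conjugate-linear involution $(\mathcal{F}_L\mathbf{w})_{\ell,m} := (-1)^m \overline{w_{\ell,-m}}$, and define $\mathcal{F}_J$ analogously on $\mathbb{C}^{(J+1)^2}$. The hypothesis on $\ba^v$ and $\beps^v$ is precisely that $\mathcal{F}_J(\ba^v+\beps^v) = \ba^v+\beps^v$, and the desired conclusion is that $\mathcal{F}_L \amin = \amin$.

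The key algebraic step is the intertwining $\mathcal{F}_J E = E\mathcal{F}_L$ (equivalently $\mathcal{F}_L E^* = E^*\mathcal{F}_J$). Starting from $(\mathcal{F}_J E\mathbf{w})_{j,\mu} = (-1)^\mu \sum_{\ell,m}\overline{E_{j,-\mu;\ell,m}}\,\overline{w_{\ell,m}}$, I would use \eqref{eq:Enegm} in the form $\overline{E_{j,-\mu;\ell,-m}} = (-1)^{m+\mu}E_{j,\mu;\ell,m}$ (obtained by taking $\mu\mapsto-\mu$ in \eqref{eq:Enegm} and conjugating), together with the substitution $m\mapsto -m$, and observe that the resulting power $(-1)^{2\mu+m}$ simplifies to $(-1)^m$, which combines with $\overline{w_{\ell,-m}}$ to recover $(\mathcal{F}_L\mathbf{w})_{\ell,m}$. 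Separately, any diagonal matrix whose diagonal entries are real and independent of the second index commutes with the corresponding $\mathcal{F}$, since such a matrix acts by a real scalar on each $\ell$-block (or $j$-block) that $\mathcal{F}$ preserves; under the assumptions of the proposition this applies to $\aCov$, $\eCov$, $\Gamma$, $\Sigma$, and also to the postprocessing matrix $\aCov(\aCov+\eCov)^{-1}$.

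Combining these facts, the operator $M := E^*\Gamma E + \Sigma$ (or $E^*\Gamma E$ when $\Sigma$ is absent) satisfies $M\mathcal{F}_L = \mathcal{F}_L M$, and the right-hand side of \eqref{eq:alpha} satisfies
\[
\mathcal{F}_L\bigl(E^*\Gamma(\ba^v+\beps^v)\bigr) = E^*\Gamma \,\mathcal{F}_J(\ba^v+\beps^v) = E^*\Gamma(\ba^v+\beps^v).
\]
Applying $\mathcal{F}_L$ to both sides of $M\balpha = E^*\Gamma(\ba^v+\beps^v)$ then shows that $\mathcal{F}_L\balpha$ solves the same linear system as $\balpha$; uniqueness of the solution (from invertibility of $M$) forces $\mathcal{F}_L\balpha = \balpha$, and applying \eqref{eq:afromalpha} (which commutes with $\mathcal{F}_L$) yields $\mathcal{F}_L\amin = \amin$, as required.

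The main obstacle I anticipate is the sign bookkeeping in the intertwining identity for $E$: three powers of $-1$ appear (one from the definition of $\mathcal{F}$, one from \eqref{eq:Enegm}, and one from the complex conjugation implicit in $\mathcal{F}$), and they must collapse cleanly after the index swaps $m\mapsto -m$ and $\mu\mapsto-\mu$. Everything else reduces to the observation that diagonal matrices with real, $m$-independent entries commute with an involution that acts only within each $\ell$-block.
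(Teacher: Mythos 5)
Your proof is correct and follows essentially the same route as the paper's: the paper's componentwise construction of the vector $\bc$ with $c_{\ell',-m'} = (-1)^{m'}\overline{\alpha_{\ell',m'}}$ is exactly your $\mathcal{F}_L\balpha$, and both arguments rest on the symmetry \eqref{eq:Enegm} of $E$, the real, $m$-independent diagonal structure of $\aCov$, $\eCov$, $\Gamma$, $\Sigma$, and uniqueness of the solution of the linear system. Your involution packaging is simply a cleaner, operator-level organization of the same sign bookkeeping that the paper carries out index by index.
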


\begin{proof}
We first show that the components of $\bb:= E^* \Gamma(\ba^v +\beps^v)$ satisfy
\[
b_{\ell,m} = (-1)^m\overline{b_{\ell,-m}}, \quad m = -\ell,\ldots,\ell,\; \ell \ge 0.
\]
We have, using \eqref{eq:Enegm}, 
\begin{align*}
   \overline{b_{\ell,-m}} &=  
   \overline{
   \sum_{j}\sum_\mu (E^*)_{\ell,-m;j,\mu} \Gamma_j (a^v_{j,\mu}+\varepsilon_{j,\mu}) }\\
   &=  \sum_{j}\sum_\mu (-1)^{m - \mu}(E^*)_{\ell,m;j,-\mu} \Gamma_j
   \overline{(a^v_{j,\mu}+\varepsilon_{j,\mu}})\\
   &= (-1)^m\sum_{j}\sum_\mu (E^*)_{\ell,m;j,-\mu} \Gamma_j (a^v_{j,-\mu}+
   \varepsilon_{j,-\mu})\\
   &= (-1)^m  b_{\ell,m},
\end{align*}
as required.   A similar argument shows that 

\[
(E^* \Gamma E +  \Sigma)_{\ell,-m; \ell', m'}
=(-1)^{m-m'}\overline{(E^* \Gamma E + \Sigma)_{\ell,m; \ell', -m'}}.
\]

Since $\balpha$ is the unique solution of 
\[
(E^* \Gamma E + \Sigma) \balpha = \bb,
\]
by taking the $(\ell, -m)$ component of this equation we obtain
\[
\sum_{\ell'}\sum_{m'}(E^* \Gamma E + \Sigma)_{\ell, -m ; \ell' m'} \alpha_{\ell',m'} 
=b_{\ell, -m},
\]
which with the above symmetry properties leads to 
\[\sum_{\ell'}\sum_{m'}(-1)^{m-m'}\overline{(E^* \Gamma E + \Sigma)_{\ell, m ; \ell' -m'} }\alpha_{\ell',m'} 
=(-1)^m \overline{b_{\ell, m}};
\]
On taking the complex conjugate and dividing by $(-1)^m$ this gives us
\begin{equation}\label{eq:c_eqn}
(E^* \Gamma E + \Sigma) \bc = \bb,
\end{equation}
where 
\begin{equation}\label{eq:c_symm}
c_{\ell', -m'} :=  (-1)^{m'} \overline{ \alpha_{\ell', m'}},\quad m' = -\ell',\ldots,\ell',\; \ell' \ge 0. 
\end{equation}
We see by uniqueness of the solution of \eqref{eq:c_eqn} that $\bc =\alpha$, thus by \eqref{eq:c_symm} the vector $\balpha$ has the desired symmetry. Multiplication by $\aCov (\aCov + \eCov)^{-1}$ clearly preserves the symmetry, thus the proof is complete.
\end{proof}

\section{Axially symmetric masks}\label{sec:axsym}
A general mask $v(\br)$ leads to a large dense matrix $E$, of size $(J+1)^2 (L+1)^2 \times (J+1)^2 (L+1)^2 $, see \eqref{eq:Edefined}, a size beyond present resources if $J$ and $L$ are in the hundreds.
In this section we consider the more tractable special case in which the mask $v$ is axially symmetric, i.e.
\[
v(\br) = v(\theta, \phi) = v(\theta),
\]
with $v$ being a function of the polar angle $\theta$ and independent of the azimuthal angle  $\phi$. For this case we have
\[
v_{k,\nu} = w_{k } \delta_{\nu,0},
\]
where
\begin{equation}\label{eq:w}
w_{k}:= v_{k,0} = \int_{\Sph} v(\br)\overline{Y_{k,0}(\br)}
\dd S(\br)
= \sqrt{\pi (2 k + 1)} \int_0^\pi v(\theta) P_{k}(\cos(\theta))\sin\theta \dd\theta.
\end{equation}
Note that $w_{k}$ is real, and that
\[
w_{k}= 0 \quad\mbox{if }k \mbox{  is odd and also }
v(-\br) = v(\br).
\]
In this case it follows from \eqref{eq:Dprops} %\eqref{eq:Eint}
that $E_{j,\mu;\ell,m}=0$
 unless $\mu = m$.
 Thus it is convenient to introduce a new notation,
 \begin{equation}\label{eq:Emlj}
 E_{j,\ell}^{(m)} := E_{j,m;\ell,m}.
 \end{equation}
Equation \eqref{eq:aequalsEa} now becomes
\begin{equation}\label{eq:axial}
\sum_{\ell=0}^L E_{j,\ell}^{(m)}a_{\ell,m}= a_{j,m}^v,
\end{equation}
in which the coefficients belonging to different values of $m$ are
completely decoupled.  This can be seen as just a special case of
\eqref{eq:aeqav_math}, albeit with uncoupled values of $m$, thus all of
the analysis in Sections~\ref{sec:PropE} and \ref{sec:Solving} remains applicable.

Note that from \eqref{eq:Ereal} of Lemma~\ref{lem:Eprop},
$E^{(m)}_{j,\ell}$ is real and
symmetric, $E^{(m)}_{j,\ell}= E^{(m)}_{\ell,j}$.  Moreover
\[
 E^{(m)}_{j,\ell} = 0 \,\mbox{ if } j + \ell \,\mbox{ is odd and }
\, v(-\br) = v(\br) \mbox{, or if } \ell < |m| \mbox{ or }j < |m| .
 \]

We can treat $E^{(m)}_{j,\ell}$ as an $(J-|m|+1) \times (L-|m|+1)$ matrix,
but how should we choose $J$?  The choice $J = L$ inevitably leads to a poorly conditioned linear system.  There would seem to be considerable benefit, at
least in theory, in taking the largest value $J = L + K$, to ensure that
the resulting overdetermined linear system makes use of all available
information.

The equation to be solved in practice is, instead of \eqref{eq:alpha}, now
\begin{equation}\label{eq:eqtosolveaxial}
  \bigl( (E^{(m)})^* \Gamma_a E^{(m)}\bigr) \balpha  = (E^{(m)})^* \Gamma_a (\ba^v + \beps^v);
\end{equation}
Or if regularisation is desired, then 
instead of \eqref{eq:alphareg} the equation to be solved becomes
\begin{equation}\label{eq:eqtosolveaxialReg}
  \bigl( (E^{(m)})^* \Gamma_a E^{(m)} +  \Sigma_a\bigr) \balpha  = (E^{(m)})^* \Gamma_a (\ba^v + \beps^v).
\end{equation}
Here $\Gamma_a$ and $\Sigma_a$ have the same diagonal values as  $\Gamma$ and $\Sigma$, but the second label on rows and columns has now disappeared, and the new matrices are of size $J \times J$ and $L \times L$ respectively.

\section{Numerical experiments} \label{sec:NumExp}

Recall that our goal is to reconstruct a scalar random field on the sphere
given only %spherical harmonic coefficients from
a masked and noisy version of the field.
We have seen in previous sections  that the problem can be reduced to the solution of the overdetermined linear system 
\begin{equation}\label{eq:model}
  E \ba = \bb^v , 
\end{equation}
where the given data $\bb^v = (\ba + \beps)^v = \ba^v + \beps^v$ are the spherical harmonic coefficients of the masked noisy map
with the coefficients $\ba$  corrupted by independent Gaussian noise $\beps$ with
mean $\inner{\beps} = \bzero$ and
variance $\inner{\beps \beps^*} = \eCov$.

%\subsection{Experiments}

To illustrate the potential of the method we consider numerical experiments
where we know the ``true'' solution $\ba$,
so we can calculate errors to test performance and the effect of model parameters,
including taking $\beps = \bzero$.
%(or $\eCov = \sigma I$ and $\sigma \to 0^+$).
\begin{figure}[ht]
    \centering
    \includegraphics[scale=0.6]{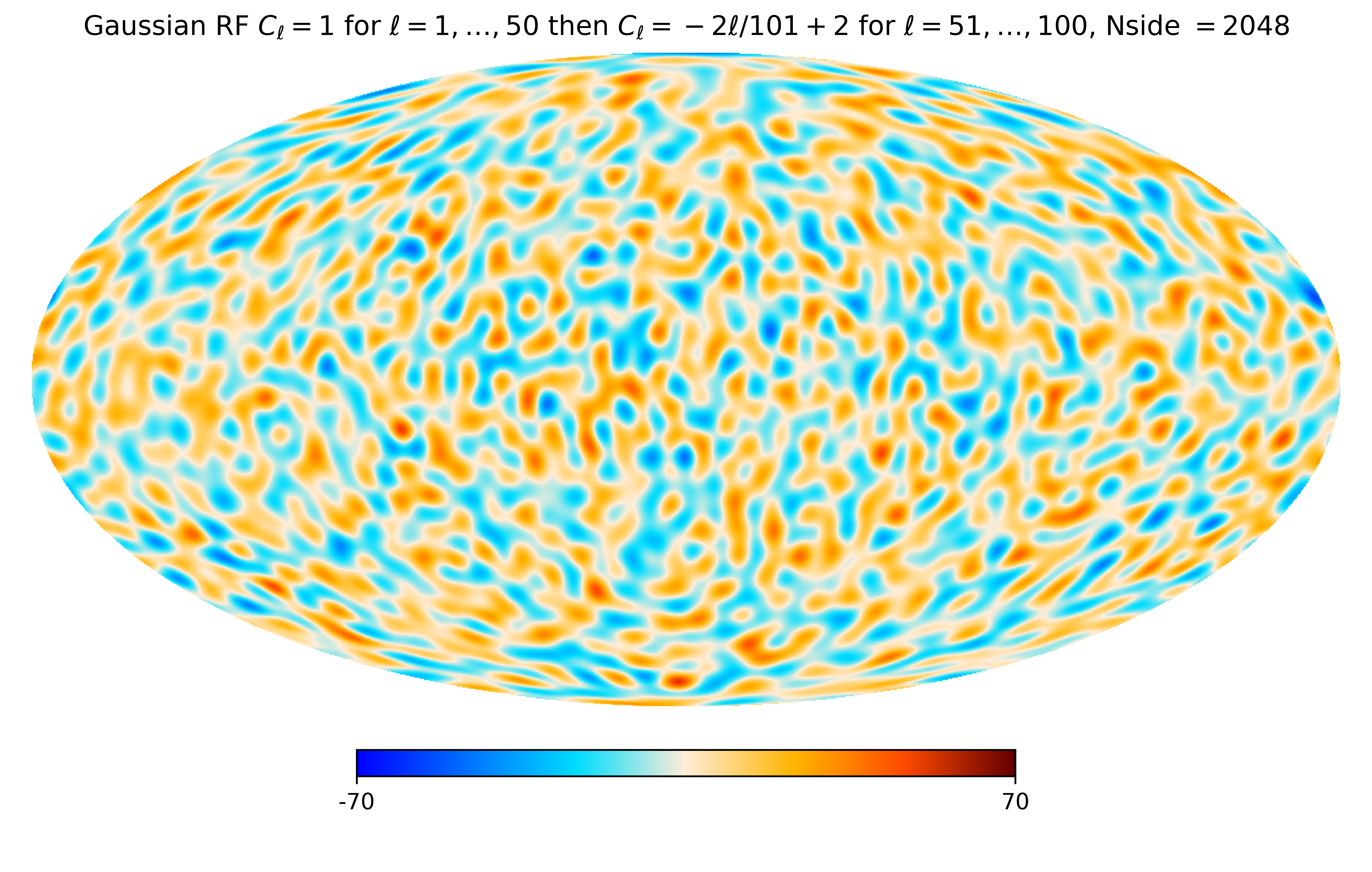}
    \caption{Original Gaussian random field}
    \label{fig:originalRF}
\end{figure}

In the experiments a specified angular power spectrum $C_\ell, \ell = 2,\ldots,L$ % for $L = 100$
is used to generate an instance of a Gaussian random field with
known spherical harmonic coefficients $\ba$ at the
$N_{\textrm{pix}} = 50,331,648$ \texttt{HEALPix}\footnote{\href{http://healpix.sf.net}{\texttt{http://healpix.sf.net}}}~\cite{2005ApJ...622..759G}
points ($N_{\textrm{side}} = 2048$), using the \texttt{HealPy}\footnote{\href{https://pypi.org/project/healpy/}{\texttt{https://pypi.org/project/healpy/}}} package~\cite{Zonca2019}.
%A random field for the noise is then generated with mean zero and covariance $\eCov$, which is added point-wise to the map.
Noise is then added as described in Subsection \ref{ss:Experiments}.
The mask is then applied pointwise to the noisy map, and the masked noisy map used to calculate the spherical harmonic coefficients $\bb^v$, again using the \texttt{HealPy} package.  The next step is to estimate the original Fourier coefficients $\ba_{\ell,m}$ using \eqref{eq:afromalpha} together with \eqref{eq:alpha}, or alternatively using \eqref{eq:afromalpha} with the regularised equation \eqref{eq:alphareg}.  The final step is to reconstruct the target field from its Fourier coefficients.

We consider a Gaussian random field with the artificial angular power spectrum
\[
C_\ell = g\left(\frac{\ell}{L+1}\right),
\]
where
\[
g(x) = \begin{cases}
 1 & \text{ for } 0 \le x \le 1/2 \\
 -2x + 2 & \text{ for } 1/2 \le x \le 1.
\end{cases}
\]
In the experiments, we assume that $L=100$ and $K=900$.
A realisation of the random field is shown in Figure~\ref{fig:originalRF}.  We shall use this realisation as the target field $a(\br)$ in all the following experiments.

\subsection{An axially symmetric mask} \label{sec:axsymact}

When the mask applied to the noisy data is axially symmetric,
the problem decomposes into independent problems
for each value of $m$, as in Section~\ref{sec:axsym}.
%when the mask $a^v(\br)$ is axially symmetric and the noise $\beps$ are 
%the spherical harmonic coefficients of an axially symmetric field.
To construct the mask we first define the following non-decreasing function $p\in C^3(\R)$:
\begin{equation} \label{eq:maskp}
p(x)=
\begin{cases}
   0 & \text { for } x\le 0,\\
   x^4(35-84x+70x^2-20x^3) &  \text{ for } 0<x<1,\\
   1 & \text{ for } x \ge 1.
 \end{cases}
\end{equation}
Then as a function of the Cartesian coordinate $z \in [-1, 1]$ of a point
on the sphere, our mask is, for $0 < a_z < b_z < 1$
\begin{equation} \label{eq:maskz}
  v(z) = %\begin{cases}
   %0 & \text { for } |z|\leq a_z,\\
   p\left(\frac{|z|-a_z}{b_z-a_z}\right). %&  \text{ for } a_z < |z| < b_z,\\
   %1 & \text{ for } |z| \ge b_z.
 %\end{cases}
\end{equation}

In Figure~\ref{fig:mask_kappa3}, an axially symmetric mask with
$a_z = \frac{\pi}{2} - \frac{10\pi}{180}$ and
$b_z = \frac{\pi}{2}- \frac{20\pi}{180}$
%$a = \frac{10\pi}{180}$ and $b = \frac{20\pi}{180}$
is plotted on the unit sphere.
\begin{figure}[ht]
    \centering
    \includegraphics[scale=0.55]{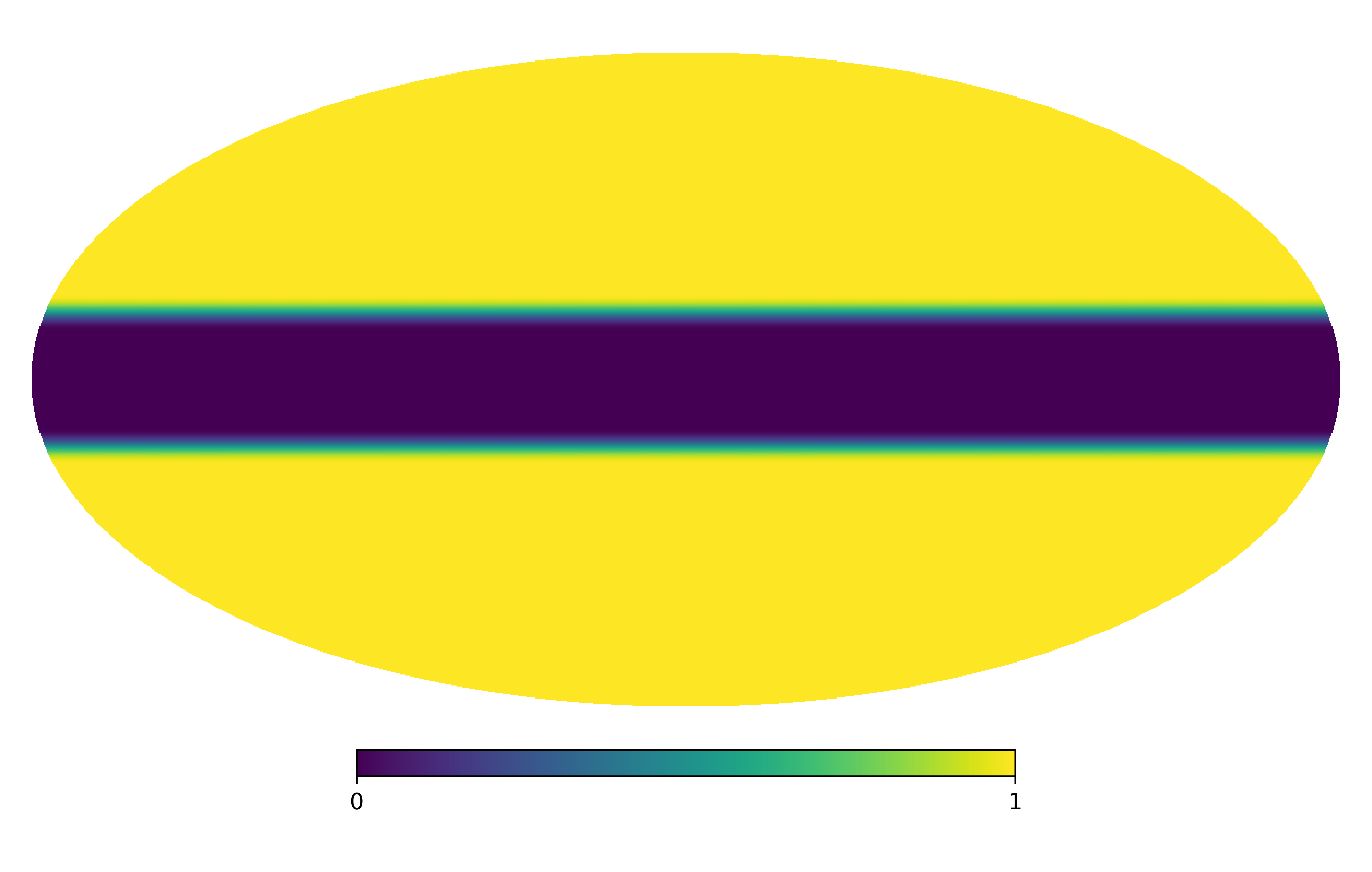}
    \caption{An axially symmetric $C^3$ mask with
    $a_z = \frac{\pi}{2} - \frac{10\pi}{180}$ and
    $b_z = \frac{\pi}{2}- \frac{20\pi}{180}$.
    %$a = \frac{10\pi}{180}$, $b = \frac{20\pi}{180}$ on the sphere.
    }
    \label{fig:mask_kappa3}
\end{figure}
This mask has the value $1$ (and hence has no masking effect) for points on the sphere more than $20^{\degree}$ from the equator, the value $0$ (complete masking) within $10^{\degree}$ from the equator, and smooth variation in between, through the function $p$, see \eqref{eq:maskp}, with an argument expressed as a function of the $z$ coordinate of a point on the sphere.  (We do not use a discontinuous mask because a discontinuous function has slow convergence of its Fourier series, leading to Gibbs' phenomenon for the truncated Fourier series.)

%\hline 
%Coloured text between lines and Figure~\ref{fig:mask_theta} to be removed:
%Ian's mask was,
%for $0 < a < b < \frac{\pi}{2}$,
%\begin{equation} \label{eq:maskv}
%v(\theta,\phi):= p\left(\frac{|\theta-\pi/2 | - a}{b-a}\right),
%\quad \theta \in [0, \pi].
%\end{equation}
%with $a = \frac{10\pi}{180}$ and $b = \frac{20\pi}{180}$.

\begin{figure}[ht]
    \centering
    \includegraphics[scale=0.80]{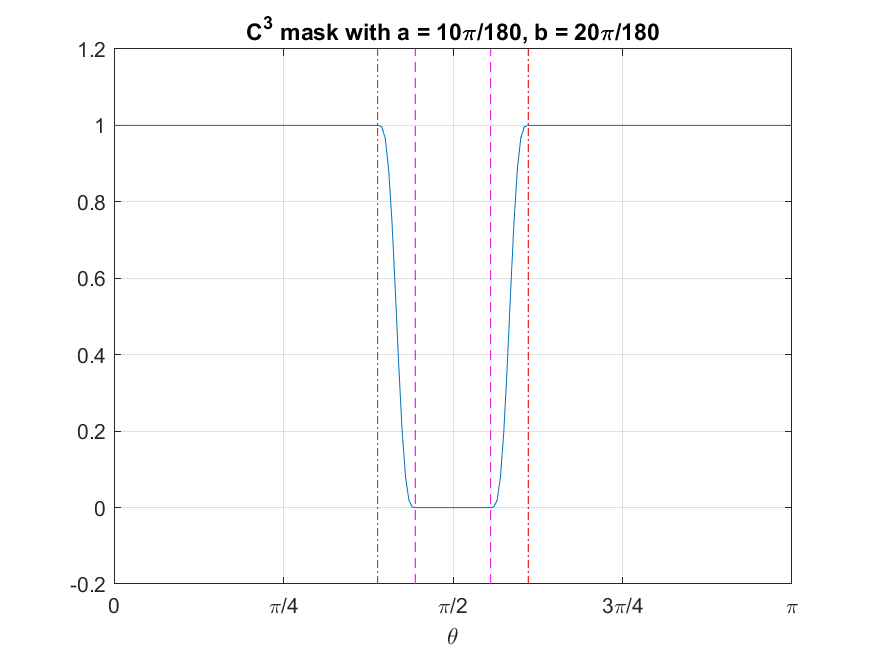}
    \caption{The $C^3$ mask $v(\theta,\phi)$ with
    $a = \frac{10\pi}{180}$, $b = \frac{20\pi}{180}$.}
    %For information only.}
    \label{fig:mask_theta}
\end{figure}
The transformation $z := \cos(\theta) = \cos(\frac{\pi}{2} - \varphi)$, with latitude $\varphi = \frac{\pi}{2} - \theta\in [-\frac{\pi}{2}, \frac{\pi}{2}]$, gives
\[
  v(\varphi, \phi) = 
   p\left(\frac{|\cos(\frac{\pi}{2} - \varphi)|-a_z}{b_z-a_z}\right) .
   %\quad\text{ for } a_z < |z| < b_z,
\]
In terms of latitude the transition region is $|\varphi| \in [a, b]$
where $a = \frac{10\pi}{180}$ and $b = \frac{20\pi}{180}$,
as illustrated in Figure~\ref{fig:mask_theta}.

The masked field, 
including the addition of noise as described in Section~\ref{ss:Experiments},
is illustrated in Figure~\ref{fig:maskedRF}.
\begin{figure}[ht]
    \centering
    \includegraphics[scale=0.6]{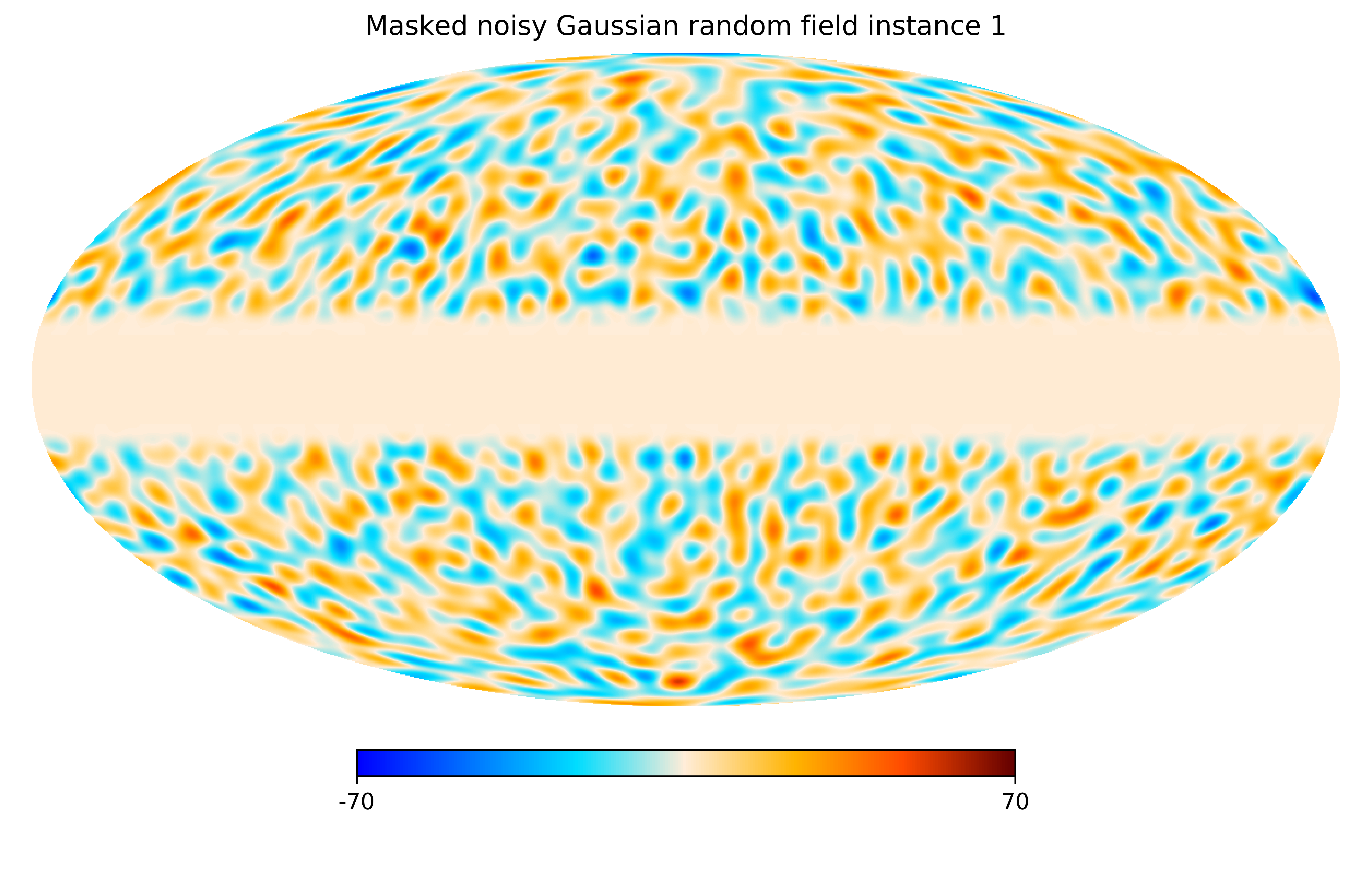}
    \caption{The masked noisy random field for $\tau = 10^{-4}$}
    \label{fig:maskedRF}
\end{figure}
 
Using this axially symmetric mask all the rectangular matrices $E=E^{(m)}$ described in Section~\ref{sec:axsymact} for $m=0,\ldots,L$ of sizes $(J+1-m) \times (L+1-m)$ with $J=L+K$ are pre-computed in parallel. Here $K$ is the maximum multipole in the spherical harmonics approximation of $v$ as in \eqref{eq:v}. We used the \texttt{sympy}
 package \cite{sympy}
to compute the entries for the matrix $E$. Fast quadrature methods on the unit sphere \cite{Plonka_et_al2023} could be used in a future implementation. 

\subsection{Numerical condition of the problem}
\label{sec:NumCond}

\begin{figure}[ht]
    \centering
    \begin{subfigure}{0.46\textwidth}
    %\fbox{
    \includegraphics[width=\textwidth,trim=1.0cm 0cm 1.1cm 0cm,clip]
    {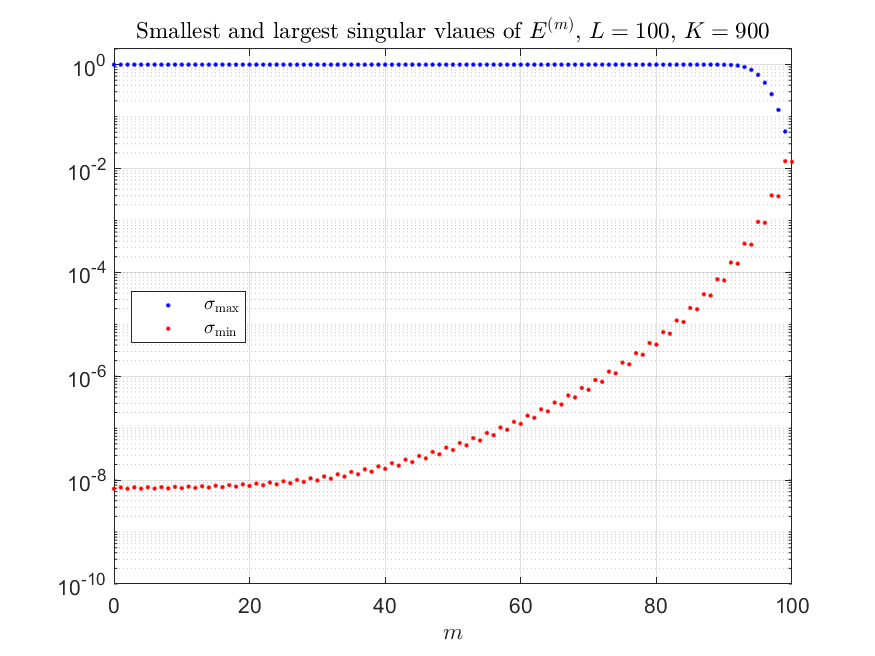}
    %}
    \caption{Smallest and largest singular values}
    \label{fig:svEmL100K900}
    \end{subfigure}
    \;
    \begin{subfigure}{0.46\textwidth}
    %\fbox{
    \includegraphics[width=\textwidth,trim=1.0cm 0cm 1.1cm 0cm,clip]
    {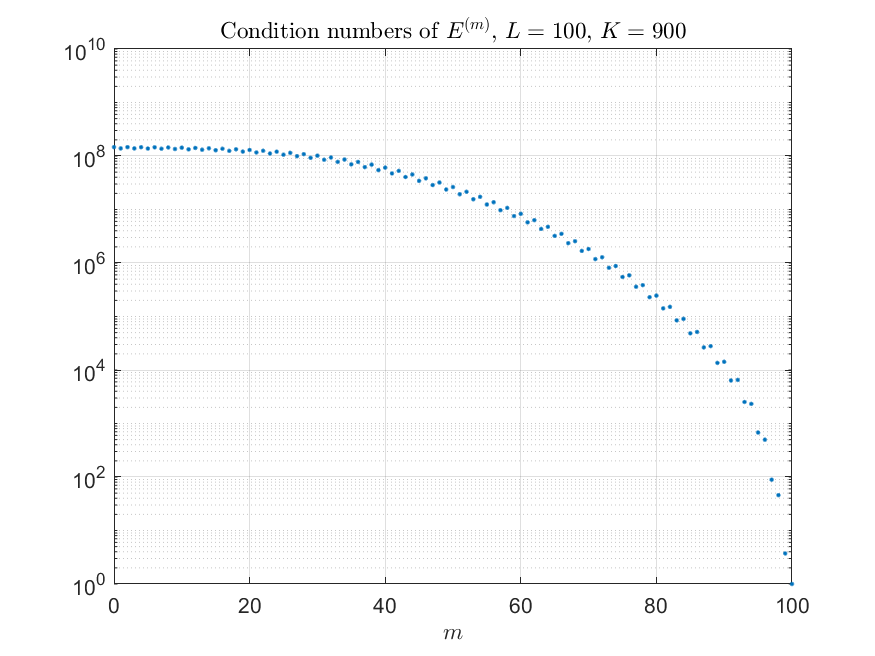}
    %}
    %\fbox{\includegraphics[width=0.4\textwidth,trim=3.8cm 9cm 3.8cm 9cm]
    %{other/condE_L1max100_L2max900.pdf}}
    \caption{Condition numbers}
    \label{fig:condEmL100K900}
    \end{subfigure}    
    \caption{Singular values and condition numbers for the the matrices $E^{(m)}$ with $L=100$ and $K=900$ and the radially symmetric mask in Figure~\ref{fig:mask_theta}.}
    \label{fig:EmL100K900}
\end{figure}
Figures~\ref{fig:svEmL100K900} and \ref{fig:condEmL100K900} illustrate the
condition of the matrices $E^{(m)}$ for
$L = 100$, $K = 900$ and the mask in Figure~\ref{fig:mask_theta}.
The largest singular values in Figure~\ref{fig:svEmL100K900} are consistent with 
the upper bound on the singular values in Theorem~\ref{thm:svbnd}. 
Even though the values of the mask lie in $[0, 1]$,
the polynomial approximation of the mask
may have values slightly outside this interval due to the Gibbs phenomenon.
The ill-conditioning of the matrices $E^{(m)}$,
as illustrated in Figure~\ref{fig:condEmL100K900},
which can be severe especially for small $m$,
arises from the smallest singular value.

To avoid the squaring of the condition number of $E$,
as happens when solving the equations \eqref{eq:alpha}
with coefficient matrix $E^* \Gamma E$,
we can use the well-known QR factorization.
Consider the generic case where $E$ is a $(J+1)^2 \times (L+1)^2$ matrix.
Assume that the positive definite matrix $\Gamma$
has a readily available factorization
$\Gamma = \Theta^* \Theta$
(for example if $\Gamma$ is diagonal) and let
\begin{equation}\label{eq:Q1R1}
  \Theta E  = Q_1 R_1,
\end{equation}
where
$Q_1$ is a $(J+1)^2 \times (L+1)^2$ unitary matrix and
$R_1$ is an $(L+1)^2 \times (L+1)^2$ upper triangular matrix.
The solution to equation \eqref{eq:alpha},
assuming $R_1$ is non-singular (i.e. $\Theta E $ has full column rank), is then obtained by solving
\begin{equation}\label{eq:QReq}
  R_1 \balpha = Q_1^* \Theta (\ba^v + \beps^v)
\end{equation}
by back substitution.
On the other hand, if $R_1$, which has the same condition number as $\Gamma^{\frac{1}{2}} E$,
has diagonal elements which are too small, 
the regularized equation \eqref{eq:alphareg}
can be used.

\subsection{Experiments}\label{ss:Experiments}

%\subsection{No noise experiment}

Initially we assume there is no noise, that is $\beps=0$.
In this case the noise covariance matrix $\eCov$ is zero.  We 
 can still interpret \eqref{eq:model} as a least squares problem,
which is solved using the QR factorization.
The reconstructed field $\widehat{a}(\br), \br\in \Sph$
%using this approach is shown in Figure~\ref{fig:reconstructedRF_QR}.}
and the corresponding error field $\widehat{a}(\br) - a(\br), \br\in \Sph$
are shown in Figure~\ref{fig:No noise}.
The rows of Tables~\ref{tab:l2_errors} and \ref{tab:l2_errors 0 and 1}
with $\eCov_\ell = 0$ correspond to the no-noise experiment.
\begin{figure}[!ht]
    \centering
    \begin{subfigure}{\textwidth}
\hspace{2cm}
    \includegraphics[scale=0.55]{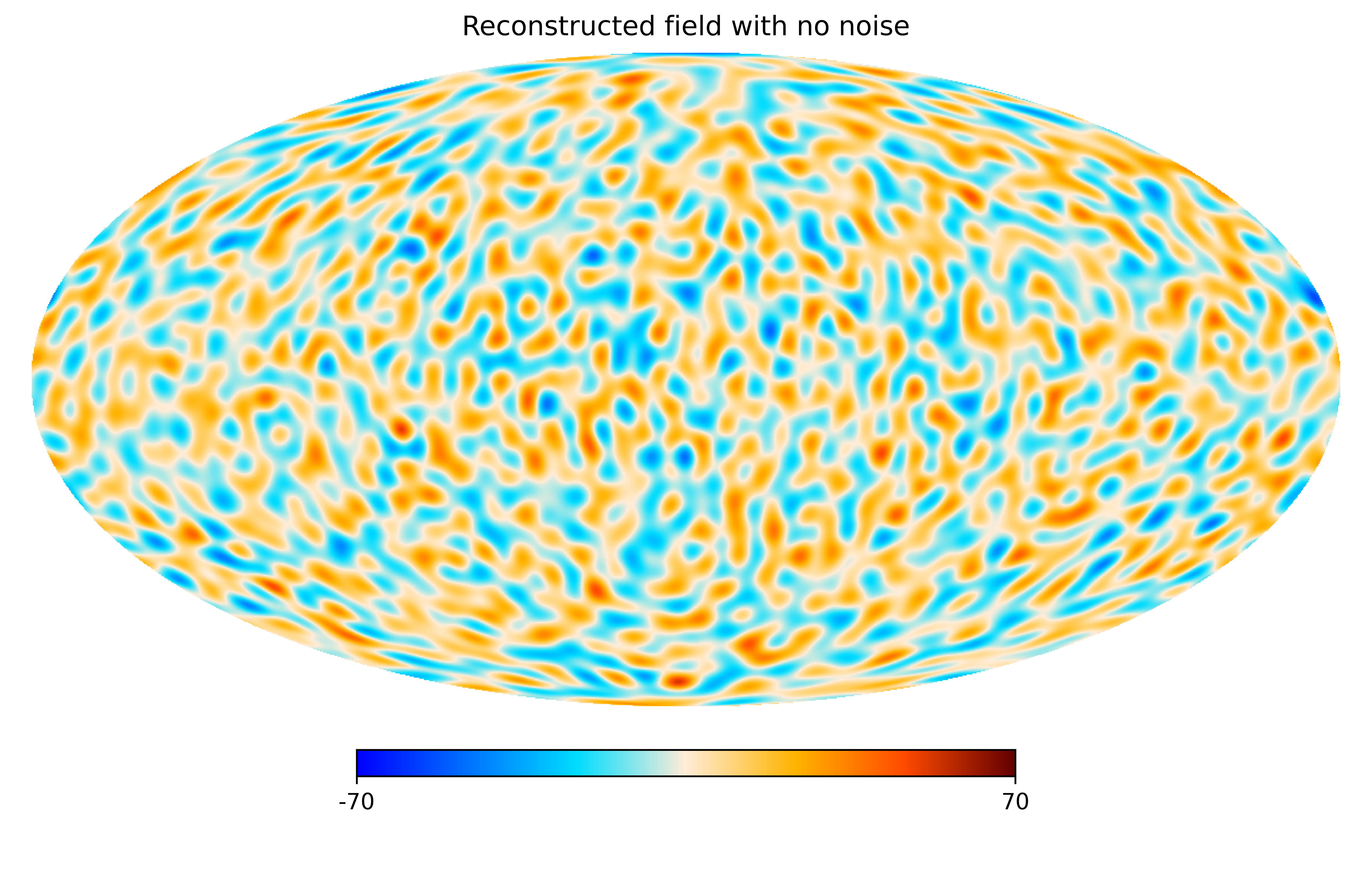}
    \caption{Reconstructed field}
    \label{fig:reconstructedRF_QR}
    \end{subfigure}
    \\
%\end{figure}
%\begin{figure}[!ht]
%\centering
    \begin{subfigure}{\textwidth}
\hspace{2cm}
    \includegraphics[scale=0.55]{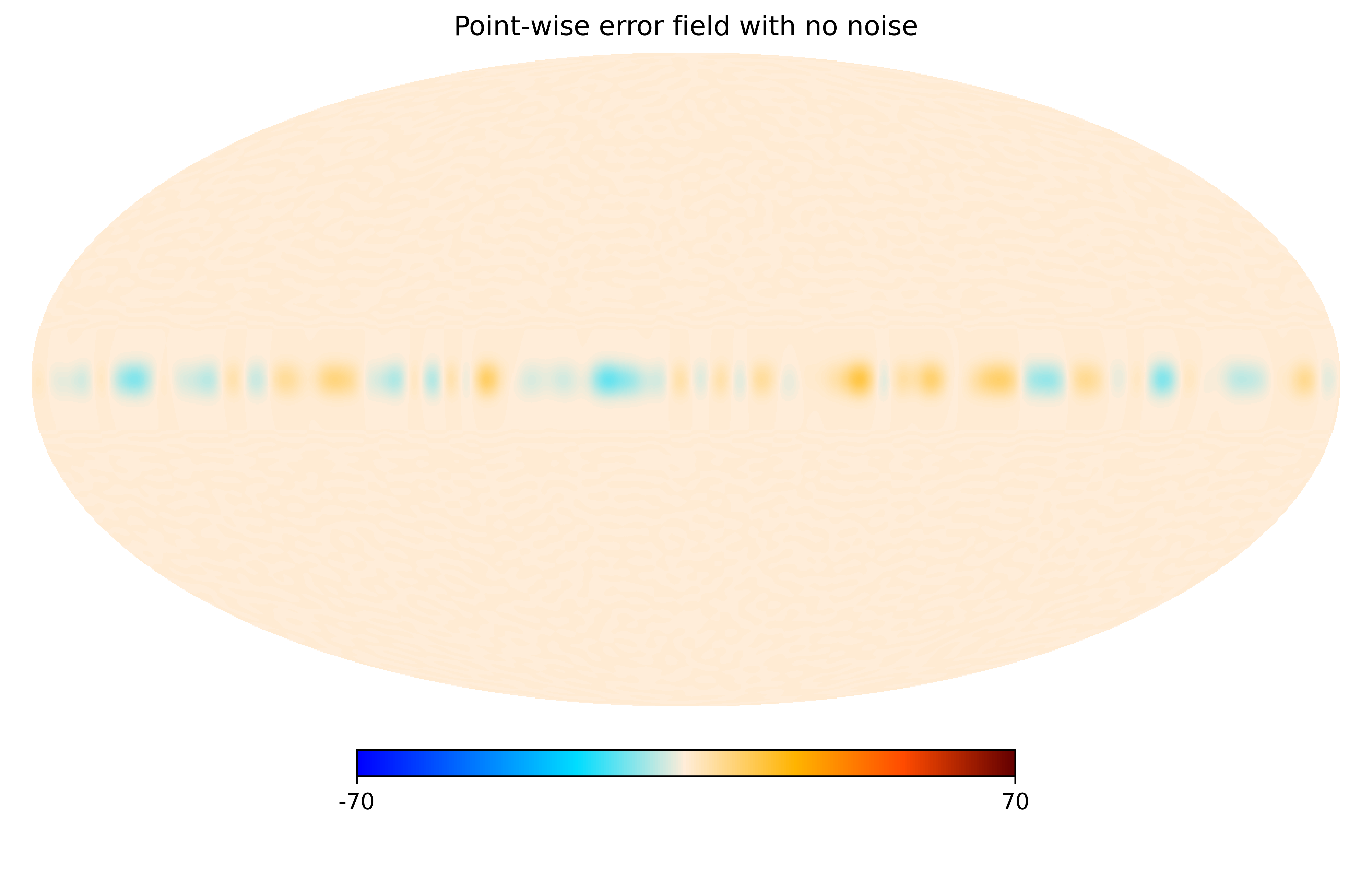}
    \caption{Error field}
    \label{fig:reconstructedRF_QR_error}
    \end{subfigure}
\caption{No noise experiment using QR factorization}
\label{fig:No noise}
\end{figure}

%We then solve \eqref{eq:solb} for the real and imaginary part of $\ba^v$ separately using
%\begin{equation}\label{eq:approach1}
%\eCov = \sigma_{\rm opt} I    
%\end{equation}
%or
%\begin{equation}\label{eq:approach2}
%  \eCov = \sigma_{\rm opt}  S, \quad 
%  S = \text{diag}(s_j), \quad \text{with } s_j = g(j/(J+1)).
%\end{equation}
%The optimal regularisation parameter $\sigma_{\rm opt}$ is found
%through a two-staged search process that aims to minimize $\|\ba - \widehat{\ba}\|$. The reconstructed field of the 2 approaches are shown in Figures~\ref{fig:reconstructedRF1} and \ref{fig:reconstructedRF2}. The point wise error fields are shown in Figures~\ref{fig:errorRF1} and
%\ref{fig:errorRF2} respectively.
%\\ RSW: Choose regularization parameter to balance $\|\ba - \widehat{\ba}\|_{\eCov}^2$ and 
%$\|\widehat{\ba}\|_{\aCov}^2$

We next generate a noise field as another Gaussian random field, with
angular power spectrum $\eCov_{\ell} = \tau C_{\ell}, \; \ell=0,\ldots,L$ with
$\tau = 10^{-4}; 10^{-3}; 10^{-2}$. %; 10^{-1}$.
%$10^{-2} C_{\ell};\, 10^{-3} C_{\ell}; \, 10^{-4} C_{\ell}, %
We then add the noise field to the original Gaussian random field,
mask it with the axially symmetric mask, then reconstruct the field
using one of the following approaches.

i) Using the QR factorization \eqref{eq:Q1R1}, we can simplify
\eqref{eq:alpha}, assuming $R_1$ is non-singular, by solving \eqref{eq:QReq} for $\balpha$.
%\begin{equation}
%R_1 \balpha = Q_1^* \Gamma^{\frac12}(\ba^v + \beps^v).
%\end{equation}
The coefficient vector $\widehat{\ba}$ is then obtained
from equation \eqref{eq:afromalpha}. Note that in our experiments,
since $\eCov = \tau \aCov$, and both matrices $\eCov$ and $\aCov$
are diagonal, \eqref{eq:afromalpha} is just
\begin{equation}\label{eq:a al simp}
  \widehat{\ba} =  \frac{1}{1+\tau} \balpha.
\end{equation}
%\widehat{\ba} &= \aCov (\aCov + \eCov)^{-1} \balpha,
The reconstructed fields $\widehat{a}(\br)$ and error fields
$\widehat{a}(\br) - a(\br)$ using this approach are plotted
for $\br\in\Sph$ in
Figures~\ref{fig:Noise sig 1e-4 QR} and \ref{fig:Noise sig 1e-2 QR}
for the two noise levels $\tau = 10^{-4}$ and $\tau = 10^{-2}$. 

ii) In the second approach, we use the regularised equation \eqref{eq:alphareg} and \eqref{eq:afromalpha} with $\Gamma$ being the $(J+1)\times(J+1)$ identity matrix and $\Sigma = \nu I$ with $I$ being 
the $(L+1)\times (L+1)$ identity matrix. 
The optimal value of $\nu$ is found by minimizing the squared $\ell_2$ error
\begin{equation}\label{eq:l2err}
% \|\ba(0) - \widehat{\ba}(0)\|^2 + 2
 %\sum_{m=1}^L \| \widehat{\ba}(m) - \ba(m) \|^2. 
 \sum_{\ell = 0} ^ \infty \sum_{m= -\ell}^\ell (\widehat{a}_{\ell,m} - a_{\ell, m})^2. 
\end{equation}
By Parseval's theorem, minimising \eqref{eq:l2err} is equivalent to minimising
$\|\widehat{a} - a\|^2_{L_2}$.
The optimal value of $\nu$ is estimated via a grid search procedure over the values
$\nu_0 = 10^{-15}; 1; 10; 10^2; 10^3; 10^4; 10^5$, then, for $\nu_0 = 10^{-15}$, zooming in to
a finer grid $$\nu_k = \{ 10^{-15}+k\times 10^{-16}: k=0,5,10,15,20,25\}.$$
For our particular experiments,
we found the optimal value of $\nu$ to be 
$2\times 10^{-15}$. However, the reconstructed fields obtained with this method were found to be consistently less accurate than those from the QR factorisation approach.  For that reason the second approach was judged to be uncompetitive, and as a result the numerical results from this method are not reported.

iii) Inspired by a recent publication on a related problem 
\cite{LiChen2023}, we used the \texttt{SPGL1} package \cite{BergFriedlander:2008,spgl1site}
%\footnote{\href{https://friedlander.io/spgl1/}{\texttt{https://friedlander.io/spgl1/}}} method %(\url{https://friedlander.io/spgl1/}) 
to solve the following optimisation problems:
\begin{equation}\label{eq-SPGL1}
\min_{ \ba \in \mathbb{C}^n} \|\ba\|_{\ell_2} \quad\text{ s.t. } \quad  
\| E \ba - \bb^v \|_{\ell_2} \le \rho,\quad \rho := \|\boldsymbol{\varepsilon}^v\|_{\ell_2},
\end{equation}
where $n=L+1-m$ and $m=0,1,\ldots,L$. Solving \eqref{eq-SPGL1} with the $\ell_2$ norm objective minimizes
the energy of the approximating field with the accuracy determined by the parameter $\rho$.
\texttt{SPGL1} can also solve \eqref{eq-SPGL1} with the objective $\| \ba \|_{\ell_1}$ to find a (group) sparse approximation $\ba$,
  still with an accuracy governed by the parameter $\rho$.
We remark that \eqref{eq-SPGL1} corresponds to (6.2) in \cite{LiChen2023}. We also tried $\rho =k\|\boldsymbol{\varepsilon}^v\|_{\ell_2}$ with $k=0.5; 0.25$ and observed a slight
decrease in the overall error but an increase in the error in the masked region. Note that we primarily focus on recovering the underlying field, even in masked regions where no data is directly available. On the other hand, the focus of Li and Chen \cite{LiChen2023} is really on finding~(group)~sparse~approximations.
\vspace{.4cm}

To obtain measures of the error using different approaches we define
\[
  \|a\|_{\ell_2} =  
  \left(
  \frac{4\pi}{N_{\rm pix}}
   \sum_{i=1}^{N_{\rm pix}} |a(\br_i)|^2 \right)^{1/2},
\]
while the root mean square error of a re-constructed map
is defined to be
\[
  {\rm RMSerr}(\widehat{a}) =  
  \left
  ( \frac{4\pi}{N_{\rm pix}}\sum_{i=1}^{N_{\rm pix}} |\widehat{a}(\boldsymbol{r}_i) - 
  a(\boldsymbol{r}_i)|^2 \right)^{1/2},
\]
where $N_{\rm pix}$ is the number of pixels of
the map in \texttt{HEALPix} format, and $\br_i$ is the $i$th Healpix point.
In our numerical experiments,
$N_{\rm pix}=50,331,648$ and $\|a\|_{\ell_2} = 50.265$.
The relative errors are defined to be 
\[
{\rm rel\;\; err}:= {\rm RMSerr}(\widehat{a})/\|a\|_{\ell_2}. 
\]
In Table~\ref{tab:l2_errors}, Columns $2$ and $3$ of show the root mean square errors and relative errors between the reconstructed maps and the original map using the QR factorisation approach while columns $4$ and $5$ show the root mean square errors and relative errors between the reconstructed maps and the original map using the \texttt{SPGL1} algorithm. It can be seen that the QR factorisation approach delivers markedly better results.

We also show the running time (in seconds) of the two algorithms, which are of the same order. The scripts were written in Matlab and Python and ran on a Linux desktop with an Intel Core i9-12900 processor and 32GB of RAM. The matrices $E^{(m)}$ were pre-computed using the high-performance computer cluster Katana~\cite{Katana} provided by UNSW, Sydney with the help of the \texttt{sympy} package \cite{sympy}. The source code of the numerical experiments is available on GitHub \footnote{\url{https://github.com/qlegia/RemovingMask}}. 
%errors 
\begin{table}
    \centering
    \begin{tabular}{|c|c|c|c|c|c|c|}
    \hline
    $\eCov_{\ell}$ 
    & RMSerr$(\widehat{a})$ &   rel err & time
    & RMSerr$(\widehat{a})$ &   rel err& time \\
    \hline
    $0$ & $3.913$ & $0.078$ & $0.7$ &$20.815$ & $0.414$ & $0.5$\\
    \hline
$10^{-4} C_{\ell}$ & $3.949$ & $0.079$ & $0.7$ & $23.253$ & $0.463$& $0.5$\\
$10^{-3} C_{\ell}$ & $4.213$  & $0.084$ &$0.7$& $24.070$ & $0.479$& $0.5$\\
$10^{-2} C_{\ell}$ & $6.364$  & $0.127$ &$0.7$& $26.147$ & $0.520$&$0.5$\\
\hline
& \multicolumn{3}{|c|}{QR} & \multicolumn{3}{|c|}{SPGL1} \\
    \hline
    \end{tabular}
    \caption{RMS errors, relative errors and running times of reconstructed maps using QR factorization vs. SPGL1 algorithm. Running times were measured in seconds and averaged over $10$ runs.}    \label{tab:l2_errors}
\end{table}

% \begin{table}
%     \centering
%     \begin{tabular}{|c|c|c|}
%     \hline
%     $\eCov_{\ell}$ 
%     & RMSerr$(\widehat{a})$ &   relative errors\\
%     \hline
%     $0$ & $3.913$ & $0.078$ \\
%     \hline
%     $10^{-4} C_{\ell}$ & $3.949$  & $0.079$ \\
%     $10^{-3} C_{\ell}$ & $4.213$  & $0.084$ \\
%     $10^{-2} C_{\ell}$ & $6.364$  & $0.127$ \\
%     \hline
%     \end{tabular}
%     \caption{RMS errors of reconstructed maps and relative errors using QR factorization.}    \label{tab:l2_errors}
% \end{table}

% \begin{table}
%     \centering
%     \begin{tabular}{|c|c|c|}
%     \hline
%     $\eCov_{\ell}$ 
%     & RMSerr$(\widehat{a})$ &   relative errors\\
%     \hline
%     0 & $20.815$ & $0.414$\\
%     \hline 
% $10^{-4} C_\ell$ & $23.253$ & $0.463$ \\
% $10^{-3} C_\ell$ & $24.070$ & $0.479$ \\
% $10^{-2} C_\ell$ & $26.147$ & $0.520$ \\ 
%     \hline
%     \end{tabular}
%     \caption{RMS errors of reconstructed maps and relative errors using the \texttt{SPGL1} method described in
%     \eqref{eq-SPGL1}. 
%     %(\url{https://friedlander.io/spgl1/}). 
%     %Here $\|a\|_{\ell_2} = 50.265$.
%     }    \label{tab:l2_errors_SPG}
% \end{table}

We also determine  
contributions to the error from two key regions of the sphere:
\[
  {\mathcal R}_0 := \{ \br \in \Sph: v(\br) = 0\}, \qquad
  {\mathcal R}_1 := \{ \br \in \Sph: v(\br) > 0\}.
\]
In ${\mathcal R}_0$ the mask is zero,
so no information is available at these points,
while in ${\mathcal R}_1 $ the mask is non-zero, so at least some information is available at all the Healpix points in ${\mathcal R}_1$.
Let $N_j$ be the number of Healpix points in region ${\mathcal R}_j$  .
In our experiments, $N_0 = 8,740,864$ and $N_1 = 41,590,784$.

In the following, we let, for $j = 0, 1$,
\[
   \|a\|_{\ell_2({\mathcal R}_j)} =  
  \left(
  \frac{4\pi}{N_j}
   \sum_{\boldsymbol{r} \in {\mathcal R}_j} |a(\boldsymbol{r})|^2 \right)^{1/2},
\quad
{\rm RMSerr_j} (\widehat{a} ) =
\left( \frac{4\pi}{N_j}\sum_{\br  \in {\mathcal R}_j}
  |\widehat{a}(\boldsymbol{r}) - 
  a(\boldsymbol{r})|^2 \right)^{1/2}.
  %\quad j=0,1.
\]
%We also let
%\[
%  \|a\|_{\ell_2({\mathcal R}_j)} =  
%  \left(
%  \frac{4\pi}{N_j}
%   \sum_{\boldsymbol{r} \in {\mathcal R}_j} |a(\boldsymbol{r})|^2 \right)^{1/2}, \quad j=0,1.
%\]
% do a consistency check
%Note that from the definitions and from the fact 
%that $\calR_0 \cup \calR_1 = \mathbb{S}^2$,
%\[
%N_{\rm pix}\|a\|^2_{\ell_2} = 
%N_0 \|a\|^2_{\ell_2({\mathcal R}_0)} +
%N_1 \|a\|^2_{\ell_2({\mathcal R}_1)}.
%\]
%%
%% 50331648*(50.265)^2 =   127166443212
% 8740864*(51.071)^2 + 41590784*(50.094)^2 =  127166613530
The relative errors on each region ${\mathcal R}_j$ for $j=0,1$ are defined
by
\begin{equation}\label{eq:errRj}
{\rm rel\; err}_j = {\rm RMSerr}_j(\widehat{a})/\|a\|_{\ell_2({\mathcal R}_j)}.
\end{equation}
In our experiments, $N_0 = 8,740,864$, $N_1 = 41,590,784$,
$\|a\|_{\ell_2(\calR_0)}=51.071$ and $\|a\|_{\ell_2(\calR_1)}=50.094$.
\begin{table}
    \centering
    \begin{tabular}{|c|c|c|c|c|}
    \hline
    $\eCov_{\ell}$ 
    & RMSerr$_0(\widehat{a})$ & rel err$_0$ 
    & RMSerr$_1(\widehat{a})$ & rel err$_1$ \\
    \hline
    $0$ & $9.390$ & $0.184$ & $9.7 \cdot 10^{-5}$ & 
      $1.9 \cdot 10^{-6}$ \\
    %$10^{-4} C_{\ell}$ & $9.408$ &   $0.184$ & $0.519$ & $0.010$ \\
    %$10^{-3} C_{\ell}$ & $9.461 $ &  $0.185$ & $1.623$  & $0.032$ \\
    %$10^{-2} C_{\ell}$ & $10.390$ &  $0.203$  & $5.130$  & $0.102$ \\  
    \hline
    $10^{-4} C_{\ell}$ & $9.409$ &   $0.184$ & $0.519$ & $0.010$ \\
    $10^{-3} C_{\ell}$ & $9.470 $ &  $0.185$ & $1.622$  & $0.032$ \\
    $10^{-2} C_{\ell}$ & $10.457$ &  $0.205$  & $5.102$  & $0.102$ \\ 
   % $10^{-1} C_{\ell}$ & $25.130$ &  $0.492$  & $22.549$  & $0.450$ \\ 
    \hline
    % $10^{-4} C_{\ell}$ & $20.994$ &   $0.411$ & $0.519$ & $0.010$ \\
    % $10^{-3} C_{\ell}$ & $22.164 $ &  $0.434$ & $1.622$  & $0.032$ \\
    % $10^{-2} C_{\ell}$ & $22.265$ &  $0.436$  & $5.102$  & $0.102$  \\
    % $10^{-1} C_{\ell}$ & $32.268$ &  $0.632$  & $22.549$  & $0.450$  \\
    % \hline
    \end{tabular}
    \caption{RMS errors of reconstructed maps 
    for different regions of the mask using QR factorisation, %(upper half table) and $\nu_{\rm opt}=1.5 \times 10^{-15}$ (lower half table), 
    where the relative error on each region is defined in \eqref{eq:errRj}. %with 
    %$\|a\|_{\ell_2(\calR_0)}=51.071$ and $\|a\|_{\ell_2(\calR_1)}=50.094$.
    }    \label{tab:l2_errors 0 and 1}
\end{table}

% How many points in total, how many in points in each region
% discussion about the numbers in tables, esp table 2

% add on a row with zero noise errors from the QR factorization
\begin{table}
    \centering
    \begin{tabular}{|c|c|c|c|c|}
    \hline
    $\eCov_{\ell}$ 
    & RMSerr$_0(\widehat{a})$ & rel err$_0$ 
    & RMSerr$_1(\widehat{a})$ & rel err$_1$ \\
    \hline
 $0$ & $49.781$ & $0.975$ & $1.867$  & $0.037$   \\
 \hline
$10^{-4} C_{\ell}$ & $51.383$ & $1.006$ & $9.973$ & $0.199$ \\
$10^{-3} C_{\ell}$ & $51.071$ & $1.001$ & $12.323$ & $0.246$ \\
$10^{-2} C_{\ell}$ & $51.071$ & $1.000$ & $16.721$ & $0.334$ \\
\hline
    \end{tabular}
    \caption{RMS errors of reconstructed maps 
    for different regions of the mask using 
    the \texttt{SPGL1} method. 
    %where the relative error on each region is defined in \eqref{eq:errRj}. %with 
    %$\|a\|_{\ell_2(\calR_0)}=51.071$ and $\|a\|_{\ell_2(\calR_1)}=50.094$.
    } \label{tab:l2_errors 0 and 1 SPG}
\end{table}
% \begin{table}
%     \centering
%     \begin{tabular}{|c|c|c|c|c|}
%     \hline
%     $\eCov_{\ell}$ 
%     & RMSerr$_0(\widehat{a})$ & relative err$_0$ 
%     & RMSerr$_1(\widehat{a})$ & relative err$_1$ \\
%     \hline
%     $10^{-4} C_{\ell}$ & $51.377$ & $1.006$ & $9.944$  & $0.199$ \\
%     $10^{-3} C_{\ell}$ & $51.116$ & $1.001$ & $12.063$ & $0.241$ \\
%     $10^{-2} C_{\ell}$ & $51.071$ & $1.000$ & $14.894$ & $0.297$\\
%     \hline
%     \end{tabular}
%     \caption{RMS errors of reconstructed maps 
%     for different regions of the mask using the \texttt{SPGL1} method which minimizes the $2$-norm, 
%     where the relative error on each region is defined in \eqref{eq:errRj} with 
%     $\|a\|_{\ell_2(\calR_0)}=51.071$ and $\|a\|_{\ell_2(\calR_1)}=50.094$.}    \label{tab:l2_errors 0 and 1 SPG grp}
% \end{table}

It can be seen from Table~\ref{tab:l2_errors 0 and 1} that the QR reconstruction is of reasonable quality even in the masked region.
%The relative errors in the unmasked region are consistent with
%the level of noise. 
In contrast, Table~\ref{tab:l2_errors 0 and 1 SPG} shows
that the \texttt{SPGL1} algorithm has essentially no validity in the masked region, in that the root mean square relative error in region ${\mathcal R}_0$ is of the order of 100\%.
\section{Conclusion}

In this paper we have analysed a spectral method for recovering a scalar field from a masked and possibly noisy version of that field.  The quality of the recovery might be considered acceptable even in the presence of noise.  However, it is acknowledged that the quality will deteriorate as the noise level is increased and as the cutoff polynomial degree is increased from $100$.

% using QR factorization

\begin{figure}[ht]
\centering
\begin{subfigure}{\textwidth}
\hspace{2cm}
  \includegraphics[scale=0.55]{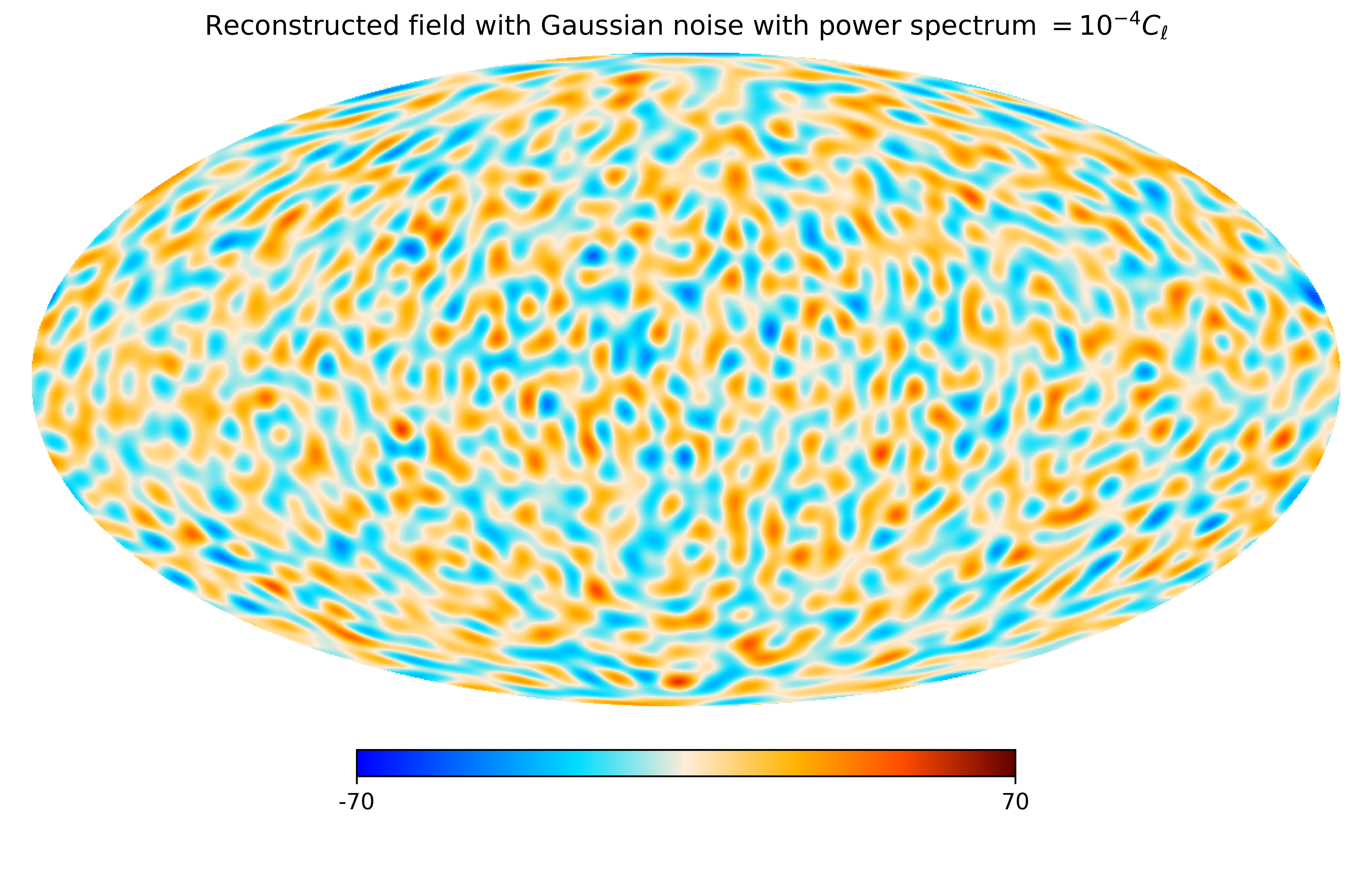}
   \caption{Reconstructed field}
   \label{fig:reconstructedRF2 Sig 1e-4 QR}
\end{subfigure}
%\end{figure}
\\
%\begin{figure}[ht]
%\centering
\begin{subfigure}{\textwidth}
\hspace{2cm}
   \includegraphics[scale=0.55]{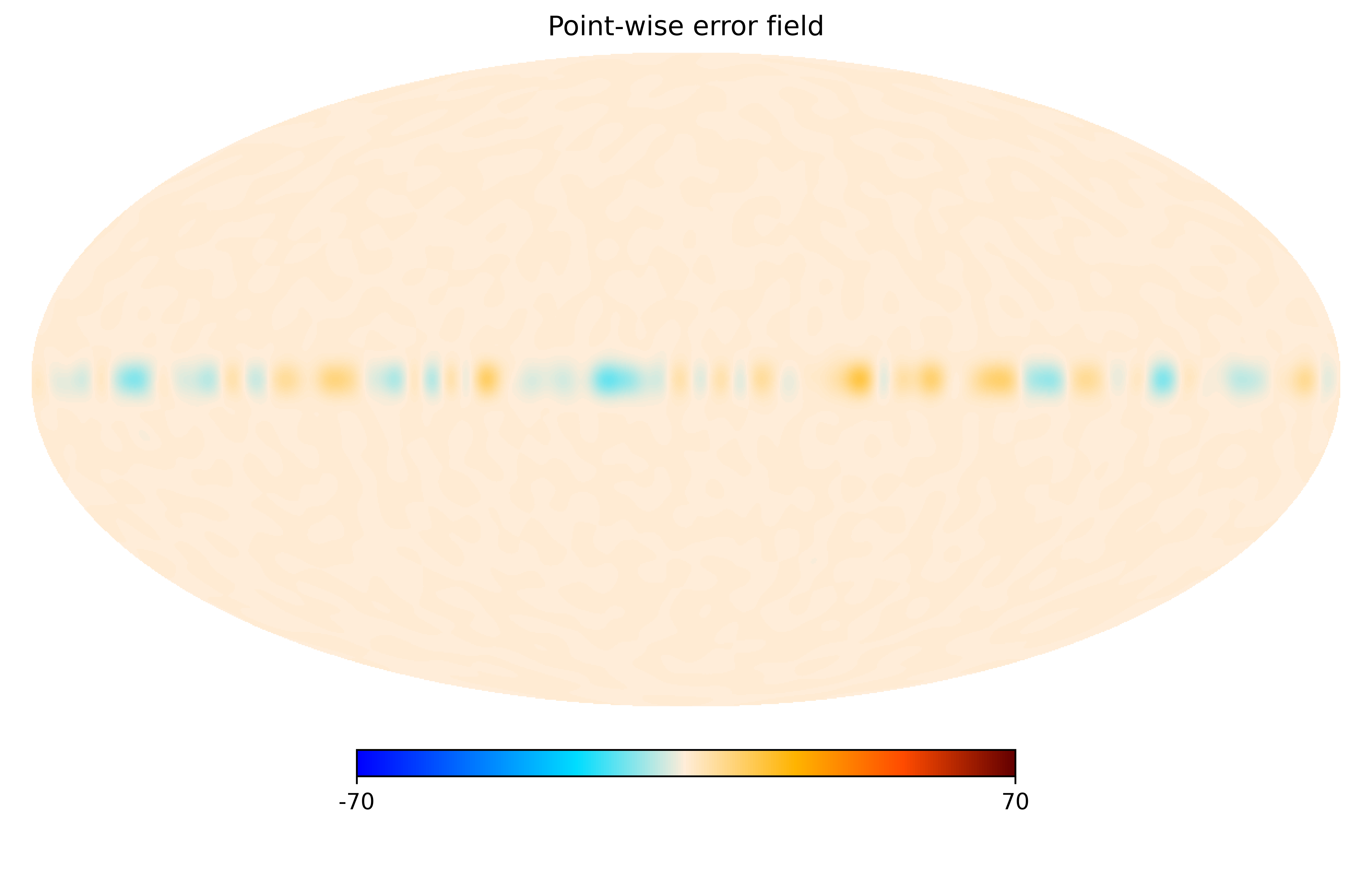}
   \caption{Error field}
   \label{fig:error2 Sig 1e-4 QR}
\end{subfigure}
\caption{QR method for Gaussian noise with angular power spectrum $\eCov_\ell = 10^{-4} C_\ell$}
\label{fig:Noise sig 1e-4 QR}
\end{figure}

\begin{figure}[ht]
\centering
\begin{subfigure}{\textwidth}
\hspace{2cm}
   \includegraphics[scale=0.55]{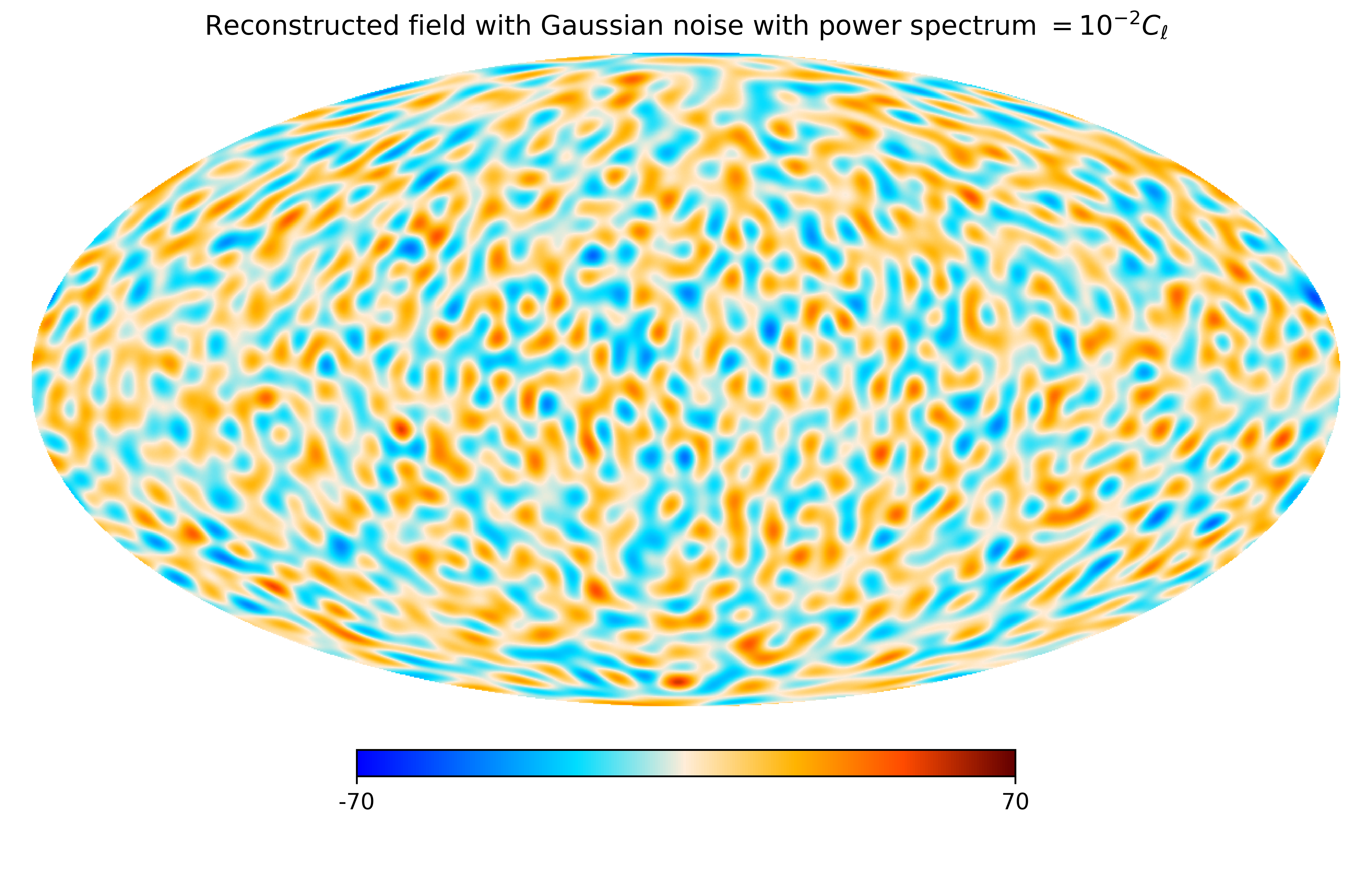}
   \caption{Reconstructed field}
   \label{fig:reconstructedRF2 Sig 1e-2 QR}
   \end{subfigure}
%\end{figure}
\\
%\begin{figure}[ht]
%\centering
\begin{subfigure}{\textwidth}
\hspace{2cm}
    \includegraphics[scale=0.55]{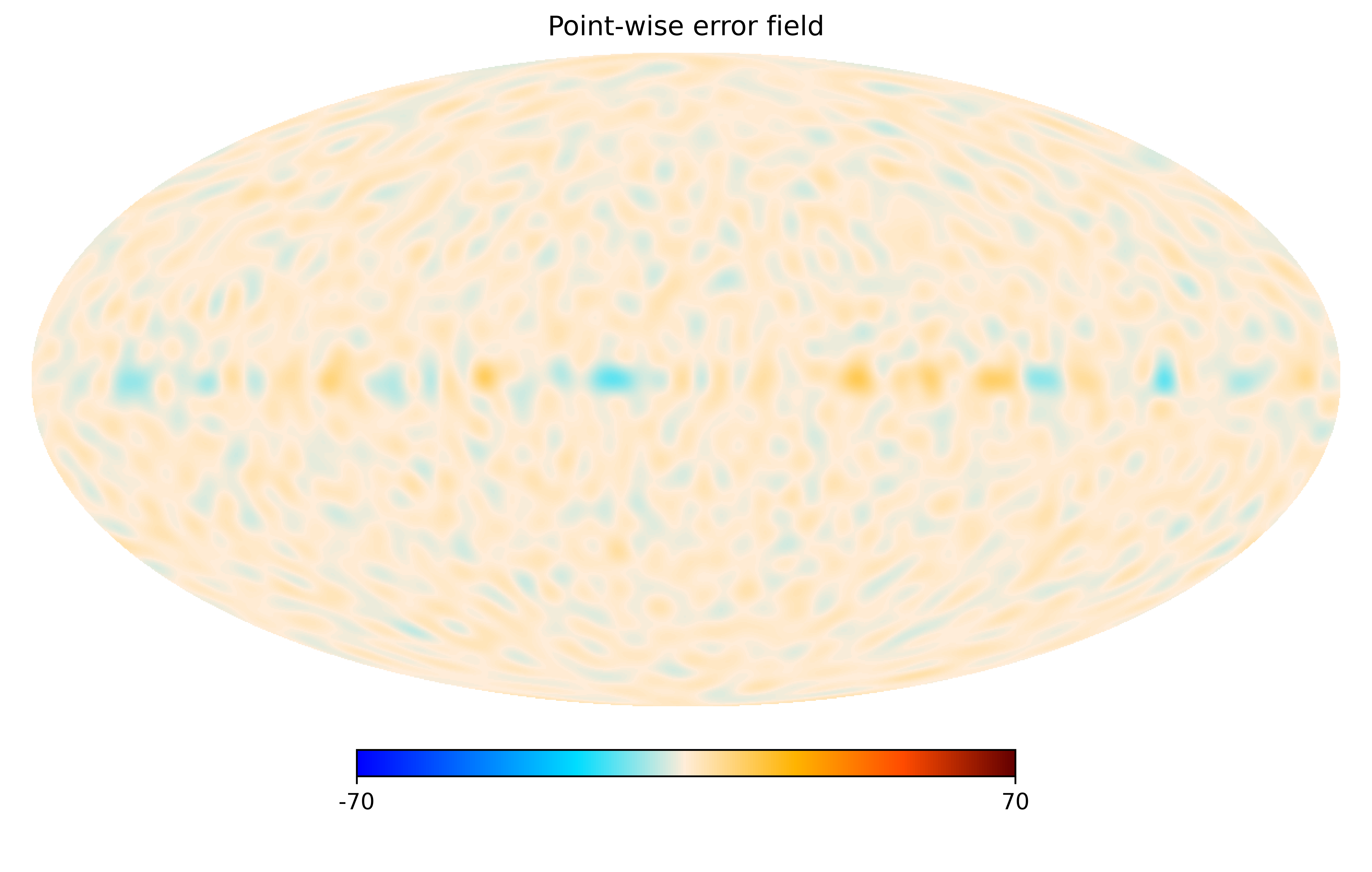}
    \caption{Error field}
   \label{fig:error2 Sig 1e-2 QR factorization}
   \end{subfigure}
\caption{QR method for Gaussian noise with angular power spectrum $\eCov_\ell = 10^{-2} C_\ell$}
\label{fig:Noise sig 1e-2 QR}
\end{figure}

%-------------------------------------------------------------------
%single sigma_k = {10^{-15}+k*10^{-16}: k=5,10,15,20,25}

\section*{Acknowledgements}
% Granting bodies acknowledged in footnote on title page
The assistance of Yu Guang Wang in the early stages of the project and constructive comments
from anonymous referees are gratefully acknowledged. 
This research includes computations using the computational cluster Katana supported by Research Technology Services at UNSW Sydney~\cite{Katana}. Some of the results in this paper have been derived using the \texttt{healpy} and \texttt{HEALPix} packages.

\bibliographystyle{plain}
\bibliography{reference}
\end{document}